\font\smallit=cmti10
\theoremstyle{plain}
\newtheorem{thm}{Theorem}
\newtheorem{lem}{Lemma}
\theoremstyle{definition}
\newtheorem{defn}{Definition}
\newtheorem*{problem}{Problem}
\theoremstyle{remark}
\newtheorem{notation}{Notation}
\newcommand{\Z}[1]{\mathbb{Z} /#1 \mathbb{Z}}
\newcommand{\m}{\mathfrak{m}}
\newcommand{\N}{\mathbb{N}}
\newcounter{x}
\begin{document}
\begin{center}
\uppercase{\bf On the problem of Molluzzo for the modulus 4}
\vskip 20pt
{\bf Jonathan Chappelon}\\
{\smallit I3M, Universit\'{e} Montpellier 2, CC 51, Place Eug\`{e}ne Bataillon, 34095 Montpellier Cedex 5, France.}\\
{\tt jonathan.chappelon@math.univ-montp2.fr}\\ 
\vskip 10pt
{\bf Shalom Eliahou}\\
{\smallit \begin{tabular}{l}
Univ Lille Nord de France, F-59000 Lille, France, \\
ULCO, LMPA J. Liouville, B.P. 699, F-62228 Calais, France, \\
CNRS, FR 2956, France. \\
\end{tabular}}\\
{\tt shalom.eliahou@lmpa.univ-littoral.fr}\\
\vskip 10pt
December 17, 2011
\end{center}

\begin{abstract}
We solve the currently smallest open case in the 1976 problem of Molluzzo on $\Z{m}$, namely the case $m=4$. This amounts to constructing, for all positive integer $n$ congruent to 0 or 7 mod 8, a sequence of integers modulo 4 of length $n$ generating, by Pascal's rule, a Steinhaus triangle containing 0,1,2,3 with equal multiplicities.
\par\textbf{Keywords:} Molluzzo's problem; Steinhaus triangles; balanced triangles; multisets; Pascal's rule.
\par\textbf{MSC2010:} 05A05; 05A15; 05B30; 05E15; 11B50; 11B75.
\end{abstract}

\section{Introduction} The problem of Molluzzo in combinatorial number theory is about the existence of certain triangular arrays in $\Z{m}$. It was first formulated by Steinhaus in 1958 for $m=2$ \cite{St}, and then generalized by Molluzzo in 1976 for $m \ge 3$ \cite{Mo}. It is still widely open for most moduli $m$. The problem reads as follows. Given $m \ge 2$, for which $n \ge 1$ does there exist a triangle 
\begin{center}
\begin{tikzpicture}[scale=0.5]

\pgfmathparse{sqrt(3)}\let\x\pgfmathresult

\node at (0,3.5*\x) {$x_{1,1}$};
\node (a) at (2,3.5*\x) {$x_{1,2}$};
\node (c) at (1,3*\x) {$x_{2,1}$};

\node at (8,3.5*\x) {$x_{1,n}$};
\node (b) at (6,3.5*\x) {$x_{1,n-1}$};
\node (e) at (7,3*\x) {$x_{2,n-1}$};

\node (d) at (4,0) {$x_{n,1}$};

\draw[dotted,line width=1pt] (a) -- (b);
\draw[dotted,line width=1pt] (c) -- (d);
\draw[dotted,line width=1pt] (e) -- (d);

\node at (-1.5,2*\x) {$\nabla =$};

\end{tikzpicture}
\end{center}
with entries $x_{i,j}$ in $\Z{m}$, of side length $n$, satisfying the following two conditions:
\begin{enumerate}
\item[(C1)] Pascal's rule: every element of $\nabla$ outside the first row is the sum of the two elements above it. That is, $x_{i,j}=x_{i-1,j}+x_{i-1,j-1}$ for all $2\leq i\leq n$ and $1\leq j\leq n-i$.
\item[(C2)] The elements of $\Z{m}$ all occur with the same multiplicity in $\nabla$.
\end{enumerate}
\begin{defn} 
A triangle $\nabla$ in $\Z{m}$ satisfying condition (C1) is called a Steinhaus triangle. It is said to be {\em balanced} if it satisfies condition (C2).
\end{defn}
By (C1), a Steinhaus triangle is completely determined by its first row $S$, and may thus be denoted by $\nabla S$. For instance, the sequence $S= 0100203$ in $\Z{4}$ generates the following Steinhaus triangle $\nabla S$: 
\begin{center}
\begin{tikzpicture}[scale=0.25]
\pgfmathparse{sqrt(3)}\let\x\pgfmathresult

\node at (0,6*\x) {$0$};
\node at (2,6*\x) {$1$};
\node at (4,6*\x) {$0$};
\node at (6,6*\x) {$0$};
\node at (8,6*\x) {$2$};
\node at (10,6*\x) {$0$};
\node at (12,6*\x) {$3$};

\node at (1,5*\x) {$1$};
\node at (3,5*\x) {$1$};
\node at (5,5*\x) {$0$};
\node at (7,5*\x) {$2$};
\node at (9,5*\x) {$2$};
\node at (11,5*\x) {$3$};

\node at (2,4*\x) {$2$};
\node at (4,4*\x) {$1$};
\node at (6,4*\x) {$2$};
\node at (8,4*\x) {$0$};
\node at (10,4*\x) {$1$};

\node at (3,3*\x) {$3$};
\node at (5,3*\x) {$3$};
\node at (7,3*\x) {$2$};
\node at (9,3*\x) {$1$};

\node at (4,2*\x) {$2$};
\node at (6,2*\x) {$1$};
\node at (8,2*\x) {$3$};

\node at (5,\x) {$3$};
\node at (7,\x) {$0$};

\node at (6,0) {$3$};
\end{tikzpicture}
\end{center}

Observe that $\nabla S$ is balanced, since each element of $\Z{4}$ appears with the same multiplicity, namely 7.

\smallskip

An obvious necessary condition for the existence of a balanced Steinhaus triangle $\nabla$ in $\Z{m}$ of side length $n$ is given by
\begin{equation}\label{nec}
{n+1 \choose 2} \, \equiv \, 0 \bmod m.
\end{equation}
Indeed, it follows from (C2) that $m$ divides the multiset cardinality of $\nabla$, which is ${n+1 \choose 2}$.

\smallskip

Is this necessary condition also sufficient? This is the more detailed content of Molluzzo's problem. While it seems reasonable to conjecture a positive answer in most cases, two counterexamples are known: in $\Z{15}$ and in $\Z{21}$, there is no balanced Steinhaus triangle of side length 5 and 6, respectively, even though the pairs $(m,n)=(15,5)$ and $(21,6)$ both satisfy condition \eqref{nec}. (See \cite[p. 75]{Ch0} and \cite[p. 293]{Ch2}.)

\smallskip

Note that, given $m \ge 2$, the condition for $n \in \N$ to satisfy \eqref{nec} only depends on the class of $n$ mod $m$ if $m$ is odd, or mod $2m$ if $m$ is even. 

\subsection{Known results}\label{known}

Despite its apparent simplicity, the problem of Molluzzo is very challenging, as testified by the scarcity of available results. The only moduli $m$ for which it has been completely solved so far are 
\begin{itemize}
\item $m = 2$ in \cite{Ha,EH,EMR}, 
\item $m=3^i$ for all $i \ge 1$ in \cite{Ch0, Ch1},
\item $m=5,7$ in \cite{Ch0, Ch3}.
\end{itemize} 
In each case, the necessary existence condition (\ref{nec}) turns out to be sufficient.

\subsection{Contents} In this paper, we solve the currently smallest open case of the problem, namely the case $m=4$. Our solution is presented in Section~\ref{solution} and proved valid in Section~\ref{proof}. Here again, the necessary existence condition \eqref{nec} is found to be sufficient. The construction method, which consists in attempting to lift to $\Z{4}$ specific known solutions in $\Z{2}$, is explained in Section~\ref{method}. It will probably take some time before complete solutions emerge for more moduli. For this reason we propose, in a short concluding section, a hopefully more tractable version of the problem.

\section{A solution for $m=4$}\label{solution}

Here we solve Molluzzo's problem in the group $\Z{4}$. For $m=4$, it is easy to see that the necessary condition (\ref{nec}) amounts to the following: for all $n \in \N$, we have
$$
{n+1 \choose 2} \,\equiv\, 0 \bmod 4 \;\; \iff \;\; n \,\equiv\, 0 \textrm{ or } 7 \bmod 8.
$$

As in \cite{EH} for the case $m=2$, our solution involves the concept of \textit{strongly balanced} triangles. We first introduce a notation for initial segments of sequences.

\begin{notation}
Let $S=(x_i)_{i \ge 1}$ be a finite or infinite sequence, and let $l \ge 0$ be an integer not exceeding the length of $S$. We denote by $S[l]=(x_1,\ldots,x_l)$ the initial segment of length $l$ of $S$.
\end{notation}

\begin{defn}\label{strong}
Let $S$ be a finite sequence of length $n\ge 0$ in $\Z{4}$. The Steinhaus triangle $\nabla S$ is said to be \textit{strongly balanced} if, for every $0\le t \le n/8$, the Steinhaus triangle $\nabla S[n-8t]$ is balanced.
\end{defn}

Here is our main result in this paper.

\begin{thm}\label{thm1} There exists a balanced Steinhaus triangle of length $n$ in $\Z{4}$ if and only if ${n+1 \choose 2} \,\equiv\, 0 \bmod 4$. More precisely, consider the following infinite, eventually periodic sequences in $\Z{4}$:
$$
\begin{array}{l}
S_1 = 01220232(212113220030232311200232)^{\infty},\\
\medskip
S_2 = 21210130(200132022112002110220130)^{\infty},\\
T_1 = 0120021(212202102023032200322021)^{\infty},\\
T_2 = 1000212(312223301210312003103232)^{\infty},\\
T_3 = 1200210(220101222032222103000210)^{\infty},\\
T_4 = 2102203(232002102021230022302203)^{\infty}.\\
\end{array}
$$
Then, for all integers $i,j,k$ such that $1 \le i \le 2$, $1 \le j \le 4$ and $k \ge 0$, the Steinhaus triangles of the initial segments $S_i[8k]$ and $T_j[8k+7]$ are strongly balanced.
\end{thm}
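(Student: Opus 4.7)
First I would reduce the theorem to a cleaner inductive statement. The strong balance property telescopes: if $\nabla S_i[8k]$ is balanced for every $k \ge 0$, then for any fixed $k$ and any $0 \le t \le k$, the triangle $\nabla S_i[8(k-t)]$ is also balanced (apply the hypothesis to $k' = k-t$), so $\nabla S_i[8k]$ is strongly balanced in the sense of Definition~\ref{strong}. The same remark applies to the $T_j$. Combined with the equivalence of $n \equiv 0$ or $7 \pmod 8$ with $\binom{n+1}{2} \equiv 0 \pmod 4$, this observation also yields the ``if'' half of the first assertion of the theorem, the ``only if'' being \eqref{nec}. Hence the entire statement reduces to proving (A) that $\nabla S_i[8k]$ is balanced for every $i \in \{1,2\}$ and $k \ge 0$, and (B) that $\nabla T_j[8k+7]$ is balanced for every $j \in \{1,\dots,4\}$ and $k \ge 0$.

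I would then prove (A) and (B) by induction on $k$, advancing $k$ by $3$ at each step so that exactly one full period of $24$ symbols is appended to the first row. Base cases $k \in \{0,1,2\}$ are handled by direct computation of each of the six Steinhaus triangles, simultaneously absorbing the aperiodic prefix. For the inductive step, decompose $\nabla S_i[8(k+3)]$ (and analogously $\nabla T_j[8(k+3)+7]$) into three disjoint regions: the previous triangle $\nabla S_i[8k]$ in the upper left; a sub-triangle of side $24$ sitting below, generated solely by the $24$ appended first-row symbols; and a band of width $24$ adjacent to the hypotenuse of the previous triangle, containing $24$ new entries in each of the $8k$ existing rows. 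By the inductive hypothesis each residue occurs with equal multiplicity in the upper-left region, so the remaining task is to establish equidistribution of residues in the band-plus-sub-triangle region.

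The sub-triangle is a fixed contribution depending only on the $24$-symbol period and can be tabulated once and for all. The band is the real source of difficulty: its entry at row $i$, column $j$ takes the form
$$
x_{i,j} \,=\, \sum_{r=0}^{i-1} \binom{i-1}{r}\, x_{1,j+r} \pmod 4,
$$
which mixes the last $i-1$ symbols of the old tail with some of the $24$ new ones. To control this multiset I would combine the fractal $2$-adic structure of $\binom{i-1}{r} \bmod 4$ (via Lucas and Kummer at the prime $2$) with the mod-$2$ reduction strategy announced in Section~\ref{method}: the mod-$2$ reductions of $S_i$ and $T_j$ give Harborth-style sequences known to produce balanced triangles in $\Z{2}$, which for free equalizes the multiplicities of $\{0,2\}$ against $\{1,3\}$ throughout the band. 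It then remains to equalize $0$ against $2$ (and $1$ against $3$) within each mod-$2$ fibre, a strictly $\Z{2}$-valued statement about the band.

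The main obstacle is precisely this refinement step. The six sequences are visibly engineered so that, after one full period has been appended, a suitable mod-$4$ boundary invariant of the tail returns to its previous value, forcing the band's refined multiplicities to be balanced independently of $k$. I would expect the forthcoming proof to package this into a single \emph{extension lemma}, uniform in $k \ge 1$, whose verification reduces to a bounded finite check enumerating the possible boundary profiles. Once that lemma is established, combining it with the inductive hypothesis and the explicit base cases settles both (A) and (B), and the theorem follows.
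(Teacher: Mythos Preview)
Your reduction to ``$\nabla S_i[8k]$ is balanced for every $k$'' and the induction-on-$k$ framework are correct and match the paper. However, the heart of the argument---why the newly added band is balanced \emph{for every} $k$---is precisely where your plan becomes vague, and the mechanism you sketch is not the one that works.

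The paper does not use the binomial formula $x_{i,j}=\sum_r\binom{i-1}{r}x_{1,j+r}$, nor the mod-$2$ lifting, at all. Instead it inducts in steps of $8$ (not $24$) and decomposes $\nabla S_1[8k]$ into side-$8$ triangular and lozenge blocks via a $\ast$-product. The top row contributes four triangles $A_0,A_1,A_2,A_3$ (the prefix and the three thirds of the period); below them sit $B_i=A_i\ast A_{i+1}$, then $C_i=B_i\ast B_{i+1}$, and two further blocks $D_0,E_0$. The key structural fact---your missing ``boundary invariant''---is completely explicit: the lower sides of $C_1,C_2,C_3,E_0$ coincide with those of $B_3,B_1,B_2,B_0$ respectively. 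Since $\ast$ depends only on lower sides, this forces $C_1\ast C_2=C_3$, $C_2\ast C_3=C_1$, $C_3\ast C_1=C_2$, $E_0\ast C_2=C_0$, so from the third block-row down the pattern is periodic and only the fourteen blocks $A_i,B_i,C_i,D_0,E_0$ ever occur. One then tabulates the multiplicities of $0,1,2,3$ in each block, observes that $\m_{C_1}+\m_{C_2}+\m_{C_3}$ is the constant $48$, and checks three cases according to $k\bmod 3$ for the width-$8$ band.

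Your plan instead hopes that Lucas/Kummer plus the mod-$2$ projection will furnish the finiteness. The mod-$2$ step does equalize $\{0,2\}$ against $\{1,3\}$, but the residual ``separate $0$ from $2$'' problem is not visibly any easier than the original, and you offer no mechanism for it beyond an unspecified extension lemma. The concrete device you are missing is the block decomposition and the lower-side coincidence above; once you have that, no binomial analysis is needed.
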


\section{Proof of Theorem~\ref{thm1}}\label{proof}

We now prove the above theorem. Actually, only the statements concerning $S_1$ are treated in detail. The proof method for the other sequences $S_2$, $T_1$, $T_2$, $T_3$, $T_4$ is similar and only briefly commented.

\begin{notation}
Let $A,B$ be two \textit{blocks}, either triangles or lozenges. We denote by $A\ast B$ the unique parallelogram they would determine by Pascal's rule (C1) if they were adjacent in a large Steinhaus triangle (as may be freely assumed to be the case). See Figure~\ref{def A*B}.
\end{notation}

\begin{figure}
\begin{center}
\begin{tikzpicture}[scale=0.25]

\pgfmathparse{sqrt(3)}\let\x\pgfmathresult

\draw (0,10*\x) -- (10,0) -- (20,10*\x);
\draw (4,6*\x) -- (8,10*\x) -- (14,4*\x);

\node at (4,26*\x/3) {$A$};
\node at (14,8*\x) {$B$};
\node at (9,5*\x) {$A\ast B$};
\end{tikzpicture}
\caption{\label{def A*B}Defining $A\ast B$.}
\end{center}
\end{figure}

Note that $A \ast B$ only depends on the right lower side of $A$ and the left lower side of $B$, and is a lozenge if $A,B$ have the same side length. For example:
\begin{itemize}
\item
if $A =$
\begin{tikzpicture}[scale=0.25]
\pgfmathparse{sqrt(3)}\let\x\pgfmathresult
\draw (0,\x) node {$0$};
\draw (2,\x) node {$1$};
\draw (1,0) node {$1$};
\end{tikzpicture}
and $B=$
\begin{tikzpicture}[scale=0.25]
\pgfmathparse{sqrt(3)}\let\x\pgfmathresult
\draw (0,\x) node {$2$};
\draw (2,\x) node {$3$};
\draw (1,0) node {$1$};
\end{tikzpicture}
, then $A\ast B =$
\begin{tikzpicture}[scale=0.25]
\pgfmathparse{sqrt(3)}\let\x\pgfmathresult
\draw (1,2*\x) node {$3$};
\draw (0,\x) node {$0$};
\draw (2,\x) node {$0$};
\draw (1,0) node {$0$};
\end{tikzpicture}
since
\begin{tikzpicture}[scale=0.25]
\pgfmathparse{sqrt(3)}\let\x\pgfmathresult
\draw (0,3*\x) node {$0$};
\draw (2,3*\x) node {$1$};
\draw (4,3*\x) node {$2$};
\draw (6,3*\x) node {$3$};
\draw (1,2*\x) node {$1$};
\draw (3,2*\x) node {$\mathbf{3}$};
\draw (5,2*\x) node {$1$};
\draw (2,\x) node {$\mathbf{0}$};
\draw (4,\x) node {$\mathbf{0}$};
\draw (3,0) node {$\mathbf{0}$};
\end{tikzpicture};

\item
if $A=$
\begin{tikzpicture}[scale=0.25]
\pgfmathparse{sqrt(3)}\let\x\pgfmathresult
\draw (1,2*\x) node {$2$};
\draw (0,\x) node {$2$};
\draw (2,\x) node {$3$};
\draw (1,0) node {$1$};
\end{tikzpicture}
and $B=$
\begin{tikzpicture}[scale=0.25]
\pgfmathparse{sqrt(3)}\let\x\pgfmathresult
\draw (1,2*\x) node {$3$};
\draw (0,\x) node {$0$};
\draw (2,\x) node {$2$};
\draw (1,0) node {$2$};
\end{tikzpicture}
, then $A\ast B=$
\begin{tikzpicture}[scale=0.25]
\pgfmathparse{sqrt(3)}\let\x\pgfmathresult
\draw (1,2*\x) node {$3$};
\draw (0,\x) node {$0$};
\draw (2,\x) node {$1$};
\draw (1,0) node {$1$};
\end{tikzpicture}
since
\begin{tikzpicture}[scale=0.25]
\pgfmathparse{sqrt(3)}\let\x\pgfmathresult
\draw (1,4*\x) node {$2$};
\draw (0,3*\x) node {$2$};
\draw (2,3*\x) node {$3$};
\draw (1,2*\x) node {$1$};

\draw (5,4*\x) node {$3$};
\draw (4,3*\x) node {$0$};
\draw (6,3*\x) node {$2$};
\draw (5,2*\x) node {$2$};

\draw (3,2*\x) node {$\mathbf{3}$};
\draw (2,\x) node {$\mathbf{0}$};
\draw (4,\x) node {$\mathbf{1}$};
\draw (3,0) node {$\mathbf{1}$};

\end{tikzpicture}.
\end{itemize}

Consider now the four triangular blocks $A_0,A_1,A_2,A_3$ depicted in Figure~\ref{fig2}. Taking the $\ast$ product of selected pairs, we define
\begin{equation}\label{eq1}
B_i=A_i \ast A_{i+1} \;\; \textrm{for $i=0,1,2$ \;\; and \;\; } B_3=A_3 \ast A_1.
\end{equation}
Similarly, we set
\begin{equation}\label{eq2}
C_i=B_i \ast B_{i+1} \;\; \textrm{for $i=0,1,2$ \;\; and \;\; } C_3=B_3 \ast B_1.
\end{equation}
Finally, we also need to set
\begin{equation}\label{eq3}
D_0 = C_0 \ast C_1 \textrm{ \;\; and \;\; } E_0 = D_0 \ast C_3.
\end{equation}
The blocks $A_i,B_i,C_i,D_0,E_0$ $(i=1,2,3)$ are displayed in Figure~\ref{fig2}. In view of the following lemma, we shall refer to them as the \textit{building blocks} of $\nabla S_1[8k]$. 

\begin{lem}\label{lem1}
For every integer $k \ge 0$, the Steinhaus triangle $\nabla S_1[8k]$ has the structure depicted in Figure~\ref{fig1}, where $A_i, B_i, C_i$ and $D_0, E_0$ are the blocks defined above. 
\end{lem}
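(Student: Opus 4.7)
The plan is to prove Lemma~\ref{lem1} by induction on $k$, leveraging the fact that the first row $S_1$ is eventually periodic with period $24 = 3\times 8$. The passage from $\nabla S_1[8k]$ to $\nabla S_1[8(k+1)]$ corresponds to appending one fresh $8$-entry block to the top row, and, because of the period $24$, there are only three possible such blocks, cycling according to $k\bmod 3$.

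For the base case, I would compute $\nabla S_1[8k]$ for $k=0,1,2,3$ directly from the given initial segment and first period of $S_1$, and check by inspection that each one is tiled exactly as prescribed by Figure~\ref{fig1} using the eleven blocks $A_i$, $B_i$, $C_i$ ($i=0,1,2,3$), $D_0$, and $E_0$. These four base cases already exhibit the three congruence classes $k\bmod 3$ and therefore anchor the periodic pattern of the tiling.

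For the inductive step, assuming $\nabla S_1[8k]$ has the claimed block structure, I would examine the new region of $\nabla S_1[8(k+1)]$ lying to the right of (and below) $\nabla S_1[8k]$. This new region is entirely determined by two data: the right lower edge of $\nabla S_1[8k]$, which by induction is a concatenation of edges of the tiling blocks, and the appended $8$-entry block from the period of $S_1$, which is the top row of the new column. Every sub-triangle or sub-lozenge of the new region is then exactly the $\ast$ product of its upper-left neighbour and its upper-right neighbour, so the recursive definitions $B_i = A_i\ast A_{i+1}$, $C_i = B_i\ast B_{i+1}$, $D_0 = C_0\ast C_1$, $E_0 = D_0\ast C_3$ tell us precisely which building block appears in each slot. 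Checking that these identities match the figure reduces to a finite tabulation of $\ast$ products between fixed blocks in $\Z{4}$, and depends only on the relevant edges since $A\ast B$ is determined by the right lower side of $A$ and the left lower side of $B$.

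The main obstacle is purely bookkeeping: one must verify that, in each of the three residue classes of $k \bmod 3$, the $8$ new top-row entries from the period of $S_1$ are exactly the top edges of the correct building blocks predicted by Figure~\ref{fig1}, and that the successive $\ast$ products of these blocks with the building blocks already present on the right edge of $\nabla S_1[8k]$ produce the next generation of $A_i$, $B_i$, $C_i$ blocks as labeled. Nothing deep is needed beyond Pascal's rule and the associativity of $\ast$ along shared edges, but the verification must be carried out uniformly in $k$, which is why isolating the three-fold periodicity at the outset is essential.
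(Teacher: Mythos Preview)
Your inductive framework is sound and matches the paper's spirit, but there is a substantive gap in the inductive step. You write that ``the recursive definitions $B_i = A_i\ast A_{i+1}$, $C_i = B_i\ast B_{i+1}$, $D_0 = C_0\ast C_1$, $E_0 = D_0\ast C_3$ tell us precisely which building block appears in each slot.'' This is not true: those ten definitions only account for the first few diagonal layers. Once you are past the $E_0$ level (and already in row~4 of Figure~\ref{fig1}), the blocks you must identify are $\ast$ products of $C_i$'s with each other and of $E_0$ with a $C_i$, and none of these products is among the defining relations. Concretely, the induction requires the four \emph{extra} identities
\[
C_1\ast C_2 = C_3,\qquad C_2\ast C_3 = C_1,\qquad C_3\ast C_1 = C_2,\qquad E_0\ast C_2 = C_0,
\]
and without them there is no reason the new column stays inside the finite alphabet $\{A_i,B_i,C_i,D_0,E_0\}$ rather than producing genuinely new lozenges at every step.

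The paper isolates exactly these four closure relations and proves them not by brute tabulation but by a short observation: the \emph{lower sides} of $C_1,C_2,C_3,E_0$ coincide, respectively, with those of $B_3,B_1,B_2,B_0$; since $A\ast B$ depends only on those lower sides, the four needed identities reduce immediately to the already-defined $B_i\ast B_j$. This edge-matching is the heart of the lemma, and it is what your proposal is missing. Your ``finite tabulation'' remark would eventually uncover the identities, but as written the proposal neither names them nor explains why the list of products to check is finite and closed.
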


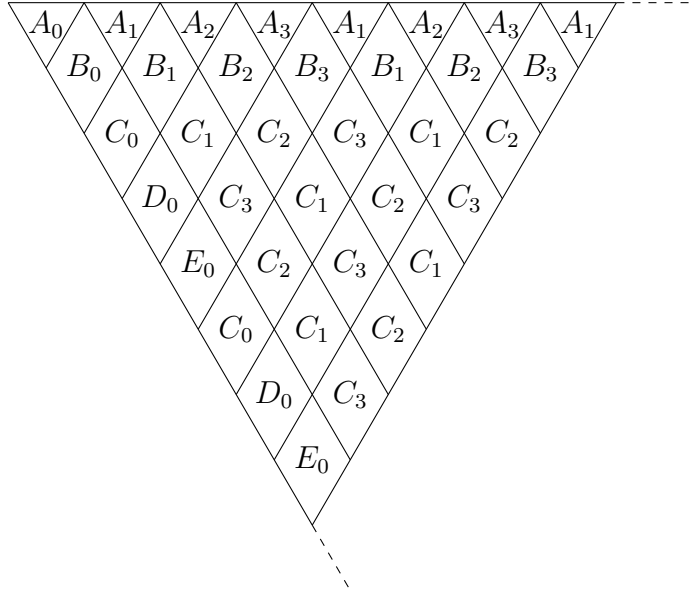
\begin{figure}[!h]
\begin{center}
\begin{tikzpicture}
\pgfmathparse{sqrt(3)/2}\let\x\pgfmathresult
\draw (0,9*\x) -- (8,9*\x);
\draw[dashed] (8,9*\x) -- (9,9*\x);
\draw[dashed] (4,\x) -- (4.5,0);
\foreach \i in {0,...,7}
{
	\pgfmathparse{\i /2}\let\j\pgfmathresult
	\pgfmathparse{\i +1}\let\k\pgfmathresult
	\draw (\i ,9*\x) -- (4+\j ,\k *\x);
}
\foreach \i in {1,...,8}
{
	\pgfmathparse{\i /2}\let\j\pgfmathresult
	\pgfmathparse{9-\i}\let\k\pgfmathresult
	\draw (\i ,9*\x) -- (\j ,\k *\x);
}

\pgfmathparse{26/3*\x}\let\y\pgfmathresult
\draw (0.5,\y) node {$A_0$};
\draw (1.5,\y) node {$A_1$};
\draw (2.5,\y) node {$A_2$};
\draw (3.5,\y) node {$A_3$};
\draw (4.5,\y) node {$A_1$};
\draw (5.5,\y) node {$A_2$};
\draw (6.5,\y) node {$A_3$};
\draw (7.5,\y) node {$A_1$};

\draw (1,8*\x) node {$B_0$};
\draw (2,8*\x) node {$B_1$};
\draw (3,8*\x) node {$B_2$};
\draw (4,8*\x) node {$B_3$};
\draw (5,8*\x) node {$B_1$};
\draw (6,8*\x) node {$B_2$};
\draw (7,8*\x) node {$B_3$};

\draw (1.5,7*\x) node {$C_0$};
\draw (2.5,7*\x) node {$C_1$};
\draw (3.5,7*\x) node {$C_2$};
\draw (4.5,7*\x) node {$C_3$};
\draw (5.5,7*\x) node {$C_1$};
\draw (6.5,7*\x) node {$C_2$};

\draw (2,6*\x) node {$D_0$};
\draw (3,6*\x) node {$C_3$};
\draw (4,6*\x) node {$C_1$};
\draw (5,6*\x) node {$C_2$};
\draw (6,6*\x) node {$C_3$};

\draw (2.5,5*\x) node {$E_0$};
\draw (3.5,5*\x) node {$C_2$};
\draw (4.5,5*\x) node {$C_3$};
\draw (5.5,5*\x) node {$C_1$};

\draw (3,4*\x) node {$C_0$};
\draw (4,4*\x) node {$C_1$};
\draw (5,4*\x) node {$C_2$};

\draw (3.5,3*\x) node {$D_0$};
\draw (4.5,3*\x) node {$C_3$};

\draw (4,2*\x) node {$E_0$};

\end{tikzpicture}
\end{center}
\caption{\label{fig1}Structure of $\nabla S_1[8k]$}
\end{figure}

\begin{figure}
\hspace{-30pt}
\begin{tabular}{cccc}

\begin{tikzpicture}[scale=0.25]
\pgfmathparse{sqrt(3)}\let\y\pgfmathresult
\setcounter{x}{0}
\foreach \i in {0,1,2,2,0,2,3,2}
{
	\draw (\thex,7*\y) node {$ \i $};
	\addtocounter{x}{2}
}
\setcounter{x}{1}
\foreach \i in {1,3,0,2,2,1,1}
{
	\draw (\thex,6*\y) node {$ \i $};
	\addtocounter{x}{2}
}
\setcounter{x}{2}
\foreach \i in {0,3,2,0,3,2}
{
	\draw (\thex,5*\y) node {$ \i $};
	\addtocounter{x}{2}
}
\setcounter{x}{3}
\foreach \i in {3,1,2,3,1}
{
	\draw (\thex,4*\y) node {$ \i $};
	\addtocounter{x}{2}
}
\setcounter{x}{4}
\foreach \i in {0,3,1,0}
{
	\draw (\thex,3*\y) node {$ \i $};
	\addtocounter{x}{2}
}
\setcounter{x}{5}
\foreach \i in {3,0,1}
{
	\draw (\thex,2*\y) node {$ \i $};
	\addtocounter{x}{2}
}
\setcounter{x}{6}
\foreach \i in {3,1}
{
	\draw (\thex,\y) node {$ \i $};
	\addtocounter{x}{2}
}
\setcounter{x}{7}
\foreach \i in {0}
{
	\draw (\thex,0) node {$ \i $};
	\addtocounter{x}{2}
}
\end{tikzpicture}

&

\begin{tikzpicture}[scale=0.25]
\pgfmathparse{sqrt(3)}\let\y\pgfmathresult
\setcounter{x}{0}
\foreach \i in {2,1,2,1,1,3,2,2}
{
	\draw (\thex,7*\y) node {$ \i $};
	\addtocounter{x}{2}
}
\setcounter{x}{1}
\foreach \i in {3,3,3,2,0,1,0}
{
	\draw (\thex,6*\y) node {$ \i $};
	\addtocounter{x}{2}
}
\setcounter{x}{2}
\foreach \i in {2,2,1,2,1,1}
{
	\draw (\thex,5*\y) node {$ \i $};
	\addtocounter{x}{2}
}
\setcounter{x}{3}
\foreach \i in {0,3,3,3,2}
{
	\draw (\thex,4*\y) node {$ \i $};
	\addtocounter{x}{2}
}
\setcounter{x}{4}
\foreach \i in {3,2,2,1}
{
	\draw (\thex,3*\y) node {$ \i $};
	\addtocounter{x}{2}
}
\setcounter{x}{5}
\foreach \i in {1,0,3}
{
	\draw (\thex,2*\y) node {$ \i $};
	\addtocounter{x}{2}
}
\setcounter{x}{6}
\foreach \i in {1,3}
{
	\draw (\thex,\y) node {$ \i $};
	\addtocounter{x}{2}
}
\setcounter{x}{7}
\foreach \i in {0}
{
	\draw (\thex,0) node {$ \i $};
	\addtocounter{x}{2}
}
\end{tikzpicture}

&

\begin{tikzpicture}[scale=0.25]
\pgfmathparse{sqrt(3)}\let\y\pgfmathresult
\setcounter{x}{0}
\foreach \i in {0,0,3,0,2,3,2,3}
{
	\draw (\thex,7*\y) node {$ \i $};
	\addtocounter{x}{2}
}
\setcounter{x}{1}
\foreach \i in {0,3,3,2,1,1,1}
{
	\draw (\thex,6*\y) node {$ \i $};
	\addtocounter{x}{2}
}
\setcounter{x}{2}
\foreach \i in {3,2,1,3,2,2}
{
	\draw (\thex,5*\y) node {$ \i $};
	\addtocounter{x}{2}
}
\setcounter{x}{3}
\foreach \i in {1,3,0,1,0}
{
	\draw (\thex,4*\y) node {$ \i $};
	\addtocounter{x}{2}
}
\setcounter{x}{4}
\foreach \i in {0,3,1,1}
{
	\draw (\thex,3*\y) node {$ \i $};
	\addtocounter{x}{2}
}
\setcounter{x}{5}
\foreach \i in {3,0,2}
{
	\draw (\thex,2*\y) node {$ \i $};
	\addtocounter{x}{2}
}
\setcounter{x}{6}
\foreach \i in {3,2}
{
	\draw (\thex,\y) node {$ \i $};
	\addtocounter{x}{2}
}
\setcounter{x}{7}
\foreach \i in {1}
{
	\draw (\thex,0) node {$ \i $};
	\addtocounter{x}{2}
}
\end{tikzpicture}

&

\begin{tikzpicture}[scale=0.25]
\pgfmathparse{sqrt(3)}\let\y\pgfmathresult
\setcounter{x}{0}
\foreach \i in {1,1,2,0,0,2,3,2}
{
	\draw (\thex,7*\y) node {$ \i $};
	\addtocounter{x}{2}
}
\setcounter{x}{1}
\foreach \i in {2,3,2,0,2,1,1}
{
	\draw (\thex,6*\y) node {$ \i $};
	\addtocounter{x}{2}
}
\setcounter{x}{2}
\foreach \i in {1,1,2,2,3,2}
{
	\draw (\thex,5*\y) node {$ \i $};
	\addtocounter{x}{2}
}
\setcounter{x}{3}
\foreach \i in {2,3,0,1,1}
{
	\draw (\thex,4*\y) node {$ \i $};
	\addtocounter{x}{2}
}
\setcounter{x}{4}
\foreach \i in {1,3,1,2}
{
	\draw (\thex,3*\y) node {$ \i $};
	\addtocounter{x}{2}
}
\setcounter{x}{5}
\foreach \i in {0,0,3}
{
	\draw (\thex,2*\y) node {$ \i $};
	\addtocounter{x}{2}
}
\setcounter{x}{6}
\foreach \i in {0,3}
{
	\draw (\thex,\y) node {$ \i $};
	\addtocounter{x}{2}
}
\setcounter{x}{7}
\foreach \i in {3}
{
	\draw (\thex,0) node {$ \i $};
	\addtocounter{x}{2}
}
\end{tikzpicture}
\\

$A_0$ & $A_1$ & $A_2$ & $A_3$ \\[2ex]

\begin{tikzpicture}[scale=0.25]
\pgfmathparse{sqrt(3)}\let\y\pgfmathresult
\setcounter{x}{7}
\foreach \i in {0}
{
	\draw (\thex,14*\y) node {$ \i $};
	\addtocounter{x}{2}
}
\setcounter{x}{6}
\foreach \i in {1,3}
{
	\draw (\thex,13*\y) node {$ \i $};
	\addtocounter{x}{2}
}
\setcounter{x}{5}
\foreach \i in {3,0,1}
{
	\draw (\thex,12*\y) node {$ \i $};
	\addtocounter{x}{2}
}
\setcounter{x}{4}
\foreach \i in {0,3,1,1}
{
	\draw (\thex,11*\y) node {$ \i $};
	\addtocounter{x}{2}
}
\setcounter{x}{3}
\foreach \i in {0,3,0,2,0}
{
	\draw (\thex,10*\y) node {$ \i $};
	\addtocounter{x}{2}
}
\setcounter{x}{2}
\foreach \i in {1,3,3,2,2,1}
{
	\draw (\thex,9*\y) node {$ \i $};
	\addtocounter{x}{2}
}
\setcounter{x}{1}
\foreach \i in {2,0,2,1,0,3,2}
{
	\draw (\thex,8*\y) node {$ \i $};
	\addtocounter{x}{2}
}
\setcounter{x}{0}
\foreach \i in {2,2,2,3,1,3,1,2}
{
	\draw (\thex,7*\y) node {$ \i $};
	\addtocounter{x}{2}
}
\setcounter{x}{1}
\foreach \i in {0,0,1,0,0,0,3}
{
	\draw (\thex,6*\y) node {$ \i $};
	\addtocounter{x}{2}
}
\setcounter{x}{2}
\foreach \i in {0,1,1,0,0,3}
{
	\draw (\thex,5*\y) node {$ \i $};
	\addtocounter{x}{2}
}
\setcounter{x}{3}
\foreach \i in {1,2,1,0,3}
{
	\draw (\thex,4*\y) node {$ \i $};
	\addtocounter{x}{2}
}
\setcounter{x}{4}
\foreach \i in {3,3,1,3}
{
	\draw (\thex,3*\y) node {$ \i $};
	\addtocounter{x}{2}
}
\setcounter{x}{5}
\foreach \i in {2,0,0}
{
	\draw (\thex,2*\y) node {$ \i $};
	\addtocounter{x}{2}
}
\setcounter{x}{6}
\foreach \i in {2,0}
{
	\draw (\thex,\y) node {$ \i $};
	\addtocounter{x}{2}
}
\setcounter{x}{7}
\foreach \i in {2}
{
	\draw (\thex,0) node {$ \i $};
	\addtocounter{x}{2}
}
\end{tikzpicture}

&

\begin{tikzpicture}[scale=0.25]
\pgfmathparse{sqrt(3)}\let\y\pgfmathresult
\setcounter{x}{7}
\foreach \i in {2}
{
	\draw (\thex,14*\y) node {$ \i $};
	\addtocounter{x}{2}
}
\setcounter{x}{6}
\foreach \i in {2,2}
{
	\draw (\thex,13*\y) node {$ \i $};
	\addtocounter{x}{2}
}
\setcounter{x}{5}
\foreach \i in {3,0,1}
{
	\draw (\thex,12*\y) node {$ \i $};
	\addtocounter{x}{2}
}
\setcounter{x}{4}
\foreach \i in {1,3,1,2}
{
	\draw (\thex,11*\y) node {$ \i $};
	\addtocounter{x}{2}
}
\setcounter{x}{3}
\foreach \i in {2,0,0,3,2}
{
	\draw (\thex,10*\y) node {$ \i $};
	\addtocounter{x}{2}
}
\setcounter{x}{2}
\foreach \i in {1,2,0,3,1,1}
{
	\draw (\thex,9*\y) node {$ \i $};
	\addtocounter{x}{2}
}
\setcounter{x}{1}
\foreach \i in {0,3,2,3,0,2,0}
{
	\draw (\thex,8*\y) node {$ \i $};
	\addtocounter{x}{2}
}
\setcounter{x}{0}
\foreach \i in {0,3,1,1,3,2,2,1}
{
	\draw (\thex,7*\y) node {$ \i $};
	\addtocounter{x}{2}
}
\setcounter{x}{1}
\foreach \i in {3,0,2,0,1,0,3}
{
	\draw (\thex,6*\y) node {$ \i $};
	\addtocounter{x}{2}
}
\setcounter{x}{2}
\foreach \i in {3,2,2,1,1,3}
{
	\draw (\thex,5*\y) node {$ \i $};
	\addtocounter{x}{2}
}
\setcounter{x}{3}
\foreach \i in {1,0,3,2,0}
{
	\draw (\thex,4*\y) node {$ \i $};
	\addtocounter{x}{2}
}
\setcounter{x}{4}
\foreach \i in {1,3,1,2}
{
	\draw (\thex,3*\y) node {$ \i $};
	\addtocounter{x}{2}
}
\setcounter{x}{5}
\foreach \i in {0,0,3}
{
	\draw (\thex,2*\y) node {$ \i $};
	\addtocounter{x}{2}
}
\setcounter{x}{6}
\foreach \i in {0,3}
{
	\draw (\thex,\y) node {$ \i $};
	\addtocounter{x}{2}
}
\setcounter{x}{7}
\foreach \i in {3}
{
	\draw (\thex,0) node {$ \i $};
	\addtocounter{x}{2}
}
\end{tikzpicture}

&

\begin{tikzpicture}[scale=0.25]
\pgfmathparse{sqrt(3)}\let\y\pgfmathresult
\setcounter{x}{7}
\foreach \i in {0}
{
	\draw (\thex,14*\y) node {$ \i $};
	\addtocounter{x}{2}
}
\setcounter{x}{6}
\foreach \i in {1,2}
{
	\draw (\thex,13*\y) node {$ \i $};
	\addtocounter{x}{2}
}
\setcounter{x}{5}
\foreach \i in {3,3,3}
{
	\draw (\thex,12*\y) node {$ \i $};
	\addtocounter{x}{2}
}
\setcounter{x}{4}
\foreach \i in {3,2,2,1}
{
	\draw (\thex,11*\y) node {$ \i $};
	\addtocounter{x}{2}
}
\setcounter{x}{3}
\foreach \i in {0,1,0,3,2}
{
	\draw (\thex,10*\y) node {$ \i $};
	\addtocounter{x}{2}
}
\setcounter{x}{2}
\foreach \i in {2,1,1,3,1,2}
{
	\draw (\thex,9*\y) node {$ \i $};
	\addtocounter{x}{2}
}
\setcounter{x}{1}
\foreach \i in {0,3,2,0,0,3,2}
{
	\draw (\thex,8*\y) node {$ \i $};
	\addtocounter{x}{2}
}
\setcounter{x}{0}
\foreach \i in {1,3,1,2,0,3,1,1}
{
	\draw (\thex,7*\y) node {$ \i $};
	\addtocounter{x}{2}
}
\setcounter{x}{1}
\foreach \i in {0,0,3,2,3,0,2}
{
	\draw (\thex,6*\y) node {$ \i $};
	\addtocounter{x}{2}
}
\setcounter{x}{2}
\foreach \i in {0,3,1,1,3,2}
{
	\draw (\thex,5*\y) node {$ \i $};
	\addtocounter{x}{2}
}
\setcounter{x}{3}
\foreach \i in {3,0,2,0,1}
{
	\draw (\thex,4*\y) node {$ \i $};
	\addtocounter{x}{2}
}
\setcounter{x}{4}
\foreach \i in {3,2,2,1}
{
	\draw (\thex,3*\y) node {$ \i $};
	\addtocounter{x}{2}
}
\setcounter{x}{5}
\foreach \i in {1,0,3}
{
	\draw (\thex,2*\y) node {$ \i $};
	\addtocounter{x}{2}
}
\setcounter{x}{6}
\foreach \i in {1,3}
{
	\draw (\thex,\y) node {$ \i $};
	\addtocounter{x}{2}
}
\setcounter{x}{7}
\foreach \i in {0}
{
	\draw (\thex,0) node {$ \i $};
	\addtocounter{x}{2}
}
\end{tikzpicture}

&

\begin{tikzpicture}[scale=0.25]
\pgfmathparse{sqrt(3)}\let\y\pgfmathresult
\setcounter{x}{7}
\foreach \i in {0}
{
	\draw (\thex , 14*\y) node {$ \i $};
	\addtocounter{x}{2}
}
\setcounter{x}{6}
\foreach \i in {1,3}
{
	\draw (\thex , 13*\y) node {$ \i $};
	\addtocounter{x}{2}
}
\setcounter{x}{5}
\foreach \i in {3,0,1}
{
	\draw (\thex , 12*\y) node {$ \i $};
	\addtocounter{x}{2}
}
\setcounter{x}{4}
\foreach \i in {0,3,1,1}
{
	\draw (\thex , 11*\y) node {$ \i $};
	\addtocounter{x}{2}
}
\setcounter{x}{3}
\foreach \i in {2,3,0,2,0}
{
	\draw (\thex , 10*\y) node {$ \i $};
	\addtocounter{x}{2}
}
\setcounter{x}{2}
\foreach \i in {1,1,3,2,2,1}
{
	\draw (\thex , 9*\y) node {$ \i $};
	\addtocounter{x}{2}
}
\setcounter{x}{1}
\foreach \i in {0,2,0,1,0,3,2}
{
	\draw (\thex , 8*\y) node {$ \i $};
	\addtocounter{x}{2}
}
\setcounter{x}{0}
\foreach \i in {3,2,2,1,1,3,1,2}
{
	\draw (\thex , 7*\y) node {$ \i $};
	\addtocounter{x}{2}
}
\setcounter{x}{1}
\foreach \i in {1,0,3,2,0,0,3}
{
	\draw (\thex , 6*\y) node {$ \i $};
	\addtocounter{x}{2}
}
\setcounter{x}{2}
\foreach \i in {1,3,1,2,0,3}
{
	\draw (\thex , 5*\y) node {$ \i $};
	\addtocounter{x}{2}
}
\setcounter{x}{3}
\foreach \i in {0,0,3,2,3}
{
	\draw (\thex , 4*\y) node {$ \i $};
	\addtocounter{x}{2}
}
\setcounter{x}{4}
\foreach \i in {0,3,1,1}
{
	\draw (\thex , 3*\y) node {$ \i $};
	\addtocounter{x}{2}
}
\setcounter{x}{5}
\foreach \i in {3,0,2}
{
	\draw (\thex , 2*\y) node {$ \i $};
	\addtocounter{x}{2}
}
\setcounter{x}{6}
\foreach \i in {3,2}
{
	\draw (\thex , \y) node {$ \i $};
	\addtocounter{x}{2}
}
\setcounter{x}{7}
\foreach \i in {1}
{
	\draw (\thex , 0) node {$ \i $};
	\addtocounter{x}{2}
}
\end{tikzpicture}\\

$B_0$ & $B_1$ & $B_2$ & $B_3$ \\[2ex]

\begin{tikzpicture}[scale=0.25]
\pgfmathparse{sqrt(3)}\let\y\pgfmathresult
\setcounter{x}{7}
\foreach \i in {2}
{
	\draw (\thex,14*\y) node {$ \i $};
	\addtocounter{x}{2}
}
\setcounter{x}{6}
\foreach \i in {1,1}
{
	\draw (\thex,13*\y) node {$ \i $};
	\addtocounter{x}{2}
}
\setcounter{x}{5}
\foreach \i in {0,2,0}
{
	\draw (\thex,12*\y) node {$ \i $};
	\addtocounter{x}{2}
}
\setcounter{x}{4}
\foreach \i in {3,2,2,1}
{
	\draw (\thex,11*\y) node {$ \i $};
	\addtocounter{x}{2}
}
\setcounter{x}{3}
\foreach \i in {2,1,0,3,2}
{
	\draw (\thex,10*\y) node {$ \i $};
	\addtocounter{x}{2}
}
\setcounter{x}{2}
\foreach \i in {2,3,1,3,1,2}
{
	\draw (\thex,9*\y) node {$ \i $};
	\addtocounter{x}{2}
}
\setcounter{x}{1}
\foreach \i in {2,1,0,0,0,3,2}
{
	\draw (\thex,8*\y) node {$ \i $};
	\addtocounter{x}{2}
}
\setcounter{x}{0}
\foreach \i in {0,3,1,0,0,3,1,1}
{
	\draw (\thex,7*\y) node {$ \i $};
	\addtocounter{x}{2}
}
\setcounter{x}{1}
\foreach \i in {3,0,1,0,3,0,2}
{
	\draw (\thex,6*\y) node {$ \i $};
	\addtocounter{x}{2}
}
\setcounter{x}{2}
\foreach \i in {3,1,1,3,3,2}
{
	\draw (\thex,5*\y) node {$ \i $};
	\addtocounter{x}{2}
}
\setcounter{x}{3}
\foreach \i in {0,2,0,2,1}
{
	\draw (\thex,4*\y) node {$ \i $};
	\addtocounter{x}{2}
}
\setcounter{x}{4}
\foreach \i in {2,2,2,3}
{
	\draw (\thex,3*\y) node {$ \i $};
	\addtocounter{x}{2}
}
\setcounter{x}{5}
\foreach \i in {0,0,1}
{
	\draw (\thex,2*\y) node {$ \i $};
	\addtocounter{x}{2}
}
\setcounter{x}{6}
\foreach \i in {0,1}
{
	\draw (\thex,\y) node {$ \i $};
	\addtocounter{x}{2}
}
\setcounter{x}{7}
\foreach \i in {1}
{
	\draw (\thex,0) node {$ \i $};
	\addtocounter{x}{2}
}
\end{tikzpicture}

& 

\begin{tikzpicture}[scale=0.25]
\pgfmathparse{sqrt(3)}\let\y\pgfmathresult
\setcounter{x}{7}
\foreach \i in {2}
{
	\draw (\thex,14*\y) node {$ \i $};
	\addtocounter{x}{2}
}
\setcounter{x}{6}
\foreach \i in {1,2}
{
	\draw (\thex,13*\y) node {$ \i $};
	\addtocounter{x}{2}
}
\setcounter{x}{5}
\foreach \i in {0,3,2}
{
	\draw (\thex,12*\y) node {$ \i $};
	\addtocounter{x}{2}
}
\setcounter{x}{4}
\foreach \i in {0,3,1,1}
{
	\draw (\thex,11*\y) node {$ \i $};
	\addtocounter{x}{2}
}
\setcounter{x}{3}
\foreach \i in {2,3,0,2,0}
{
	\draw (\thex,10*\y) node {$ \i $};
	\addtocounter{x}{2}
}
\setcounter{x}{2}
\foreach \i in {1,1,3,2,2,1}
{
	\draw (\thex,9*\y) node {$ \i $};
	\addtocounter{x}{2}
}
\setcounter{x}{1}
\foreach \i in {0,2,0,1,0,3,2}
{
	\draw (\thex,8*\y) node {$ \i $};
	\addtocounter{x}{2}
}
\setcounter{x}{0}
\foreach \i in {3,2,2,1,1,3,1,2}
{
	\draw (\thex,7*\y) node {$ \i $};
	\addtocounter{x}{2}
}
\setcounter{x}{1}
\foreach \i in {1,0,3,2,0,0,3}
{
	\draw (\thex,6*\y) node {$ \i $};
	\addtocounter{x}{2}
}
\setcounter{x}{2}
\foreach \i in {1,3,1,2,0,3}
{
	\draw (\thex,5*\y) node {$ \i $};
	\addtocounter{x}{2}
}
\setcounter{x}{3}
\foreach \i in {0,0,3,2,3}
{
	\draw (\thex,4*\y) node {$ \i $};
	\addtocounter{x}{2}
}
\setcounter{x}{4}
\foreach \i in {0,3,1,1}
{
	\draw (\thex,3*\y) node {$ \i $};
	\addtocounter{x}{2}
}
\setcounter{x}{5}
\foreach \i in {3,0,2}
{
	\draw (\thex ,2*\y) node {$ \i $};
	\addtocounter{x}{2}
}
\setcounter{x}{6}
\foreach \i in {3,2}
{
	\draw (\thex,\y) node {$ \i $};
	\addtocounter{x}{2}
}
\setcounter{x}{7}
\foreach \i in {1}
{
	\draw (\thex,0) node {$ \i $};
	\addtocounter{x}{2}
}
\end{tikzpicture}

& 

\begin{tikzpicture}[scale=0.25]
\pgfmathparse{sqrt(3)}\let\y\pgfmathresult
\setcounter{x}{7}
\foreach \i in {0}
{
	\draw (\thex,14*\y) node {$ \i $};
	\addtocounter{x}{2}
}
\setcounter{x}{6}
\foreach \i in {2,1}
{
	\draw (\thex,13*\y) node {$ \i $};
	\addtocounter{x}{2}
}
\setcounter{x}{5}
\foreach \i in {0,3,2}
{
	\draw (\thex, 12*\y) node {$ \i $};
	\addtocounter{x}{2}
}
\setcounter{x}{4}
\foreach \i in {1,3,1,2}
{
	\draw (\thex,11*\y) node {$ \i $};
	\addtocounter{x}{2}
}
\setcounter{x}{3}
\foreach \i in {2,0,0,3,2}
{
	\draw (\thex,10*\y) node {$ \i $};
	\addtocounter{x}{2}
}
\setcounter{x}{2}
\foreach \i in {1,2,0,3,1,1}
{
	\draw (\thex,9*\y) node {$ \i $};
	\addtocounter{x}{2}
}
\setcounter{x}{1}
\foreach \i in {0,3,2,3,0,2,0}
{
	\draw (\thex,8*\y) node {$ \i $};
	\addtocounter{x}{2}
}
\setcounter{x}{0}
\foreach \i in {0,3,1,1,3,2,2,1}
{
	\draw (\thex,7*\y) node {$ \i $};
	\addtocounter{x}{2}
}
\setcounter{x}{1}
\foreach \i in {3,0,2,0,1,0,3}
{
	\draw (\thex,6*\y) node {$ \i $};
	\addtocounter{x}{2}
}
\setcounter{x}{2}
\foreach \i in {3,2,2,1,1,3}
{
	\draw (\thex,5*\y) node {$ \i $};
	\addtocounter{x}{2}
}
\setcounter{x}{3}
\foreach \i in {1,0,3,2,0}
{
	\draw (\thex,4*\y) node {$ \i $};
	\addtocounter{x}{2}
}
\setcounter{x}{4}
\foreach \i in {1,3,1,2}
{
	\draw (\thex ,3*\y) node {$ \i $};
	\addtocounter{x}{2}
}
\setcounter{x}{5}
\foreach \i in {0,0,3}
{
	\draw (\thex,2*\y) node {$ \i $};
	\addtocounter{x}{2}
}
\setcounter{x}{6}
\foreach \i in {0,3}
{
	\draw (\thex,\y) node {$ \i $};
	\addtocounter{x}{2}
}
\setcounter{x}{7}
\foreach \i in {3}
{
	\draw (\thex,0) node {$ \i $};
	\addtocounter{x}{2}
}
\end{tikzpicture}

& 

\begin{tikzpicture}[scale=0.25]
\pgfmathparse{sqrt(3)}\let\y\pgfmathresult
\setcounter{x}{7}
\foreach \i in {2}
{
	\draw (\thex,14*\y) node {$ \i $};
	\addtocounter{x}{2}
}
\setcounter{x}{6}
\foreach \i in {1,1}
{
	\draw (\thex,13*\y) node {$ \i $};
	\addtocounter{x}{2}
}
\setcounter{x}{5}
\foreach \i in {0,2,0}
{
	\draw (\thex,12*\y) node {$ \i $};
	\addtocounter{x}{2}
}
\setcounter{x}{4}
\foreach \i in {3,2,2,1}
{
	\draw (\thex,11*\y) node {$ \i $};
	\addtocounter{x}{2}
}
\setcounter{x}{3}
\foreach \i in {0,1,0,3,2}
{
	\draw (\thex,10*\y) node {$ \i $};
	\addtocounter{x}{2}
}
\setcounter{x}{2}
\foreach \i in {2,1,1,3,1,2}
{
	\draw (\thex,9*\y) node {$ \i $};
	\addtocounter{x}{2}
}
\setcounter{x}{1}
\foreach \i in {0,3,2,0,0,3,2}
{
	\draw (\thex,8*\y) node {$ \i $};
	\addtocounter{x}{2}
}
\setcounter{x}{0}
\foreach \i in {1,3,1,2,0,3,1,1}
{
	\draw (\thex,7*\y) node {$ \i $};
	\addtocounter{x}{2}
}
\setcounter{x}{1}
\foreach \i in {0,0,3,2,3,0,2}
{
	\draw (\thex,6*\y) node {$ \i $};
	\addtocounter{x}{2}
}
\setcounter{x}{2}
\foreach \i in {0,3,1,1,3,2}
{
	\draw (\thex,5*\y) node {$ \i $};
	\addtocounter{x}{2}
}
\setcounter{x}{3}
\foreach \i in {3,0,2,0,1}
{
	\draw (\thex,4*\y) node {$ \i $};
	\addtocounter{x}{2}
}
\setcounter{x}{4}
\foreach \i in {3,2,2,1}
{
	\draw (\thex,3*\y) node {$ \i $};
	\addtocounter{x}{2}
}
\setcounter{x}{5}
\foreach \i in {1,0,3}
{
	\draw (\thex,2*\y) node {$ \i $};
	\addtocounter{x}{2}
}
\setcounter{x}{6}
\foreach \i in {1,3}
{
	\draw (\thex,\y) node {$ \i $};
	\addtocounter{x}{2}
}
\setcounter{x}{7}
\foreach \i in {0}
{
	\draw (\thex,0) node {$ \i $};
	\addtocounter{x}{2}
}
\end{tikzpicture}
\\

$C_0$ & $C_1$ & $C_2$ & $C_3$ \\[2ex]

\end{tabular}

\caption{\label{fig2}The building blocks of $\nabla S_1[8k]$}
\end{figure}

\begin{figure}

\begin{center}
\begin{tabular}{cccc}

\begin{tikzpicture}[scale=0.25]
\pgfmathparse{sqrt(3)}\let\y\pgfmathresult
\setcounter{x}{7}
\foreach \i in {0}
{
	\draw (\thex,14*\y) node {$ \i $};
	\addtocounter{x}{2}
}
\setcounter{x}{6}
\foreach \i in {2,1}
{
	\draw (\thex,13*\y) node {$ \i $};
	\addtocounter{x}{2}
}
\setcounter{x}{5}
\foreach \i in {0,3,2}
{
	\draw (\thex,12*\y) node {$ \i $};
	\addtocounter{x}{2}
}
\setcounter{x}{4}
\foreach \i in {1,3,1,2}
{
	\draw (\thex,11*\y) node {$ \i $};
	\addtocounter{x}{2}
}
\setcounter{x}{3}
\foreach \i in {0,0,0,3,2}
{
	\draw (\thex,10*\y) node {$ \i $};
	\addtocounter{x}{2}
}
\setcounter{x}{2}
\foreach \i in {1,0,0,3,1,1}
{
	\draw (\thex,9*\y) node {$ \i $};
	\addtocounter{x}{2}
}
\setcounter{x}{1}
\foreach \i in {2,1,0,3,0,2,0}
{
	\draw (\thex,8*\y) node {$ \i $};
	\addtocounter{x}{2}
}
\setcounter{x}{0}
\foreach \i in {3,3,1,3,3,2,2,1}
{
	\draw (\thex,7*\y) node {$ \i $};
	\addtocounter{x}{2}
}
\setcounter{x}{1}
\foreach \i in {2,0,0,2,1,0,3}
{
	\draw (\thex,6*\y) node {$ \i $};
	\addtocounter{x}{2}
}
\setcounter{x}{2}
\foreach \i in {2,0,2,3,1,3}
{
	\draw (\thex,5*\y) node {$ \i $};
	\addtocounter{x}{2}
}
\setcounter{x}{3}
\foreach \i in {2,2,1,0,0}
{
	\draw (\thex,4*\y) node {$ \i $};
	\addtocounter{x}{2}
}
\setcounter{x}{4}
\foreach \i in {0,3,1,0}
{
	\draw (\thex,3*\y) node {$ \i $};
	\addtocounter{x}{2}
}
\setcounter{x}{5}
\foreach \i in {3,0,1}
{
	\draw (\thex,2*\y) node {$ \i $};
	\addtocounter{x}{2}
}
\setcounter{x}{6}
\foreach \i in {3,1}
{
	\draw (\thex,\y) node {$ \i $};
	\addtocounter{x}{2}
}
\setcounter{x}{7}
\foreach \i in {0}
{
	\draw (\thex,0) node {$ \i $};
	\addtocounter{x}{2}
}
\end{tikzpicture}

& 

\begin{tikzpicture}[scale=0.25]
\pgfmathparse{sqrt(3)}\let\y\pgfmathresult
\setcounter{x}{7}
\foreach \i in {2}
{
	\draw (\thex,14*\y) node {$ \i $};
	\addtocounter{x}{2}
}
\setcounter{x}{6}
\foreach \i in {1,2}
{
	\draw (\thex,13*\y) node {$ \i $};
	\addtocounter{x}{2}
}
\setcounter{x}{5}
\foreach \i in {0,3,2}
{
	\draw (\thex,12*\y) node {$ \i $};
	\addtocounter{x}{2}
}
\setcounter{x}{4}
\foreach \i in {0,3,1,1}
{
	\draw (\thex,11*\y) node {$ \i $};
	\addtocounter{x}{2}
}
\setcounter{x}{3}
\foreach \i in {0,3,0,2,0}
{
	\draw (\thex,10*\y) node {$ \i $};
	\addtocounter{x}{2}
}
\setcounter{x}{2}
\foreach \i in {1,3,3,2,2,1}
{
	\draw (\thex,9*\y) node {$ \i $};
	\addtocounter{x}{2}
}
\setcounter{x}{1}
\foreach \i in {2,0,2,1,0,3,2}
{
	\draw (\thex,8*\y) node {$ \i $};
	\addtocounter{x}{2}
}
\setcounter{x}{0}
\foreach \i in {2,2,2,3,1,3,1,2}
{
	\draw (\thex,7*\y) node {$ \i $};
	\addtocounter{x}{2}
}
\setcounter{x}{1}
\foreach \i in {0,0,1,0,0,0,3}
{
	\draw (\thex,6*\y) node {$ \i $};
	\addtocounter{x}{2}
}
\setcounter{x}{2}
\foreach \i in {0,1,1,0,0,3}
{
	\draw (\thex,5*\y) node {$ \i $};
	\addtocounter{x}{2}
}
\setcounter{x}{3}
\foreach \i in {1,2,1,0,3}
{
	\draw (\thex,4*\y) node {$ \i $};
	\addtocounter{x}{2}
}
\setcounter{x}{4}
\foreach \i in {3,3,1,3}
{
	\draw (\thex,3*\y) node {$ \i $};
	\addtocounter{x}{2}
}
\setcounter{x}{5}
\foreach \i in {2,0,0}
{
	\draw (\thex,2*\y) node {$ \i $};
	\addtocounter{x}{2}
}
\setcounter{x}{6}
\foreach \i in {2,0}
{
	\draw (\thex,\y) node {$ \i $};
	\addtocounter{x}{2}
}
\setcounter{x}{7}
\foreach \i in {2}
{
	\draw (\thex,0) node {$ \i $};
	\addtocounter{x}{2}
}
\end{tikzpicture}

& & \\

$D_0$ & $E_0$ & & \\

\end{tabular}

\vspace{1cm}
Figure~\ref{fig2}: The building blocks of $\nabla S_1[8k]$ (continued)
\end{center}
\end{figure}

\begin{proof}
Recall that $S_1 = 01220232(212113220030232311200232)^{\infty}$. Thus, $S_1$ is made of an initial block $I=01220232$ of length 8, and a period $P_1P_2P_3$ of length $24=3 \times 8$, where 
$$
P_1=21211322,\;\; P_2=00302323,\;\; P_3=11200232.
$$ 
Observe that $A_0 = \nabla I$ and $A_i=\nabla P_i$ for $i=1,2,3$. This accounts for the top structure of $\nabla S_1[8k]$. Now, by definition we  have
$$
\begin{array}{cccc}
B_0 = A_0\ast A_1, & B_1 = A_1\ast A_2, & B_2 = A_2 \ast A_3, & B_3 = A_3 \ast A_1, \\
C_0 = B_0\ast B_1, & C_1 = B_1\ast B_2, & C_2 = B_2 \ast B_3, & C_3 = B_3 \ast B_1, \\
D_0 = C_0 \ast C_1, & E_0 = D_0 \ast C_3. & & \\
\end{array}
$$
It remains to show that $C_1\ast C_2 = C_3$, $C_2\ast C_3 = C_1$, $C_3\ast C_1 = C_2$ and $E_0\ast C_2 = C_0$. To do this, recall that the $\ast$ product of two blocks only depends on their lower sides, and observe on Figure~\ref{fig2} that 
\begin{enumerate}
\item[$-$] the lower sides of $C_1$ coincide with those of $B_3$;
\item[$-$] the lower sides of $C_2$ coincide with those of $B_1$;
\item[$-$] the lower sides of $C_3$ coincide with those of $B_2$;
\item[$-$] the lower sides of $E_0$ coincide with those of $B_0$.
\end{enumerate}
It follows that
\begin{equation}\label{eq4}
\begin{array}{ccc}
C_1\ast C_2 = B_3\ast B_1 = C_3, & & C_2\ast C_3 = B_1\ast B_2 = C_1, \\
C_3\ast C_1 = B_2\ast B_3 = C_2, & & E_0\ast C_2 = B_0\ast B_1 = C_0, \\
\end{array}
\end{equation}
as desired. This completes the proof of the lemma.
\end{proof}

We are now in a position to prove our main result.

\

\noindent\textit{Proof of Theorem~\ref{thm1} for $S_1$.} We shall prove, by induction on $k$, that the Steinhaus triangle $\nabla S_1[8k]$ is strongly balanced. This is true for $k=0$. For $k \ge 1$, it suffices to show that $\nabla S_1[8k]$ is balanced. It will then automatically be strongly balanced since $\nabla S_1[8k-8]$ is, by the induction hypothesis. Thus, we are assuming that $0,1,2,3$ occur with the same multiplicity in $\nabla S_1[8k-8]$, and we must show that this remains true in $\nabla S_1[8k]$.

Consider the multiset difference 
$$
T \; =\; \nabla S_1[8k] \setminus \nabla S_1[8k-8],
$$
a band of width 8 bordering the eastern side of $\nabla S_1[8k]$. To conclude the proof, we need only show that $0,1,2,3$ occur with the same multiplicity in $T$. For any finite multiset $X$ on $\Z{4}$, and for all $j \in \Z{4}$, let us denote by
$$
\m_{X}(j)
$$
the occurrence multiplicity of $j$ in $X$. 

In order to determine the function $\m_{T}$ on $\Z{4}$, we need to determine $\m_{X}$ for the building blocks $X=A_i,B_i,C_i, D_0,E_0$. This is done in Table~\ref{tab1}. 
\begin{table}[!h]
\begin{center}
\begin{tabular}{|c||c|c|c|c|c|c|c|c|c|c|c|c|c|c|}
\hline
 & $A_0$ & $A_1$ & $A_2$ & $A_3$ & $B_0$ & $B_1$ & $B_2$ & $B_3$ & $C_0$ & $C_1$ & $C_2$ & $C_3$ & $D_0$ & $E_0$ \\
\hline\hline
$0$ & $9$ & $5$ & $8$ & $7$ & $20$ & $16$ & $15$ & $16$ & $17$ & $15$ & $17$ & $16$ & $20$ & $19$ \\
\hline
$1$ & $9$ & $10$ & $9$ & $10$ & $15$ & $15$ & $16$ & $17$ & $17$ & $16$ & $15$ & $17$ & $15$ & $14$ \\
\hline
$2$ & $9$ & $11$ & $8$ & $11$ & $14$ & $16$ & $15$ & $14$ & $17$ & $17$ & $15$ & $16$ & $14$ & $17$ \\
\hline
$3$ & $9$ & $10$ & $11$ & $8$ & $15$ & $17$ & $18$ & $17$ & $13$ & $16$ & $17$ & $15$ & $15$ & $14$ \\
\hline
\end{tabular}
\end{center}
\caption{\label{tab1}Multiplicities of $0,1,2,3$ in each building block of $\nabla S_1[8k]$}
\end{table}
Let now $C$ denote the multiset union of $C_1,C_2,C_3$. That is, by definition we have
$$
\m_{C}(j) = \m_{C_1}(j) + \m_{C_2}(j) + \m_{C_3}(j)
$$
for all $j \in \Z{4}$. Looking at the columns below $C_1,C_2,C_3$ in Table~\ref{tab1}, we see that
\begin{equation}\label{m_C}
\m_{C}(j) = 15+16+17 =48
\end{equation}
for all $j \in \Z{4}$. We are now ready to show that $\m_T$ is constant on $\Z{4}$. For this, we need to distinguish 3 cases, according to the class of $k$ mod 3.

\begin{itemize}
\item Case 1: $k=3q$. Figure~\ref{fig1} shows that the building blocks making up $T$ are $A_2$, $B_1$ and $C_0$ occurring once each, and $C_1,C_2,C_3$ occurring $q-1$ times each. Therefore, using Table~\ref{tab1} and \eqref{m_C}, we get
$$
\m_T(j)
\begin{array}[t]{l}
= \m_{A_2}(j) + \m_{B_1}(j) + \m_{C_0}(j) + (q-1) \m_{C}(j)\\[2ex]
= 41 + 48(q-1),
\end{array}
$$
for all $j\in\Z{4}$. 
\item Case 2: $k=3q+1$. In this case, the building blocks making up $T$ are $A_3$, $B_2$, $C_1$ and $D_0$ occurring once each, and $C_1,C_2,C_3$ occurring $q-1$ times each. Thus
$$
\m_T(j)
\begin{array}[t]{l}
= \m_{A_3}(j) +\m_{B_2}(j) + \m_{C_1}(j) + \m_{D_0}(j) + (q-1) \m_{C}(j)\\[2ex]
= 57 + 48(q-1),
\end{array}
$$
for all $j\in\Z{4}$. 
\item Case 3: $k=3q+2$. Now, the building blocks making up $T$ are $A_1$, $B_3$, $C_2$, $C_3$ and $E_0$ occurring once each, and $C_1,C_2,C_3$ occurring $q-1$ times each. Thus
$$
\m_T(j)
\begin{array}[t]{l}
= \m_{A_1}(j) + \m_{B_3}(j) + \m_{C_2}(j) + \m_{C_3}(j) + \m_{E_0}(j) + (q-1) \m_{C}(j)\\[2ex]
= 73 + 48(q-1),
\end{array}
$$
for all $j\in\Z{4}$. 
\end{itemize}
Hence $\m_{T}$ is constant on $\Z{4}$ in each case, as desired. This completes the proof of Theorem~\ref{thm1} for the sequence $S_1$.

The proof for the sequences $S_2,T_1,T_2,T_3,T_4$ follows similar lines. To start with, the structure of each derived triangle is the same as in Figure~\ref{fig1}. Indeed, let $A_0,A_1,A_2,A_3$ be the triangles constructed from the finite subsequences given in Table~\ref{tab2} for each sequence $S_2,T_1,T_2,T_3,T_4$. Let now $B_0,B_1,B_2,B_3,C_0,C_1,C_2,C_3,D_0,E_0$ be the blocks defined by the same formulae \eqref{eq1}, \eqref{eq2} and \eqref{eq3} as in the proof for $S_1$. Then, as easily verified, the equalities \eqref{eq4} still hold. Finally, as for $S_1$, the conclusion of the proof follows from the knowledge of the multiplicities of $0,1,2,3\in \mathbb{Z}/4\mathbb{Z}$ for each block. For convenience, these multiplicities are made explicit in Table~\ref{tab3}.

\begin{table}[!h]
\begin{center}
\begin{tabular}{|c||c|c|c|c|}
\hline
 & $A_0$ & $A_1$ & $A_2$ & $A_3$ \\
\hline\hline
$S_2$ & $\nabla(21210130)$ & $\nabla(20013202)$ & $\nabla(21120021)$ & $\nabla(10220130)$ \\
\hline
$T_1$ & $\nabla(0120021)$ & $\nabla(21220210)$ & $\nabla(20230322)$ & $\nabla(00322021)$ \\
\hline
$T_2$ & $\nabla(1000212)$ & $\nabla(31222330)$ & $\nabla(12103120)$ & $\nabla(03103232)$ \\
\hline
$T_3$ & $\nabla(1200210)$ & $\nabla(22010122)$ & $\nabla(20322221)$ & $\nabla(03000210)$ \\
\hline
$T_4$ & $\nabla(2102203)$ & $\nabla(23200210)$ & $\nabla(20212300)$ & $\nabla(22302203)$ \\
\hline
\end{tabular}
\end{center}
\caption{\label{tab2}Definition of blocks $A_0,A_1,A_2,A_3$ for the sequences $S_2,T_1,T_2,T_3,T_4$.}
\end{table}
\begin{table}[!h]
\vspace{1cm}
\begin{center}
$S_2$ : 
\begin{tabular}{|c||c|c|c|c|c|c|c|c|c|c|c|c|c|c|}
\hline
& $A_0$ & $A_1$ & $A_2$ & $A_3$ & $B_0$ & $B_1$ & $B_2$ & $B_3$ & $C_0$ & $C_1$ & $C_2$ & $C_3$ & $D_0$ & $E_0$ \\
\hline\hline
$0$ & $9$ & $8$ & $6$ & $10$ & $17$ & $19$ & $14$ & $15$ & $16$ & $13$ & $20$ & $15$ & $20$ & $15$ \\
\hline
$1$ & $9$ & $9$ & $10$ & $11$ & $16$ & $11$ & $19$ & $20$ & $20$ & $18$ & $10$ & $20$ & $9$ & $14$ \\
\hline
$2$ & $9$ & $12$ & $12$ & $8$ & $13$ & $13$ & $18$ & $17$ & $16$ & $19$ & $12$ & $17$ & $12$ & $15$ \\
\hline
$3$ & $9$ & $7$ & $8$ & $7$ & $18$ & $21$ & $13$ & $12$ & $12$ & $14$ & $22$ & $12$ & $23$ & $20$ \\
\hline
\end{tabular}\\[2ex] 
$T_1$ : 
\begin{tabular}{|c||c|c|c|c|c|c|c|c|c|c|c|c|c|c|}
\hline
& $A_0$ & $A_1$ & $A_2$ & $A_3$ & $B_0$ & $B_1$ & $B_2$ & $B_3$ & $C_0$ & $C_1$ & $C_2$ & $C_3$ & $D_0$ & $E_0$ \\
\hline\hline
$0$ & $7$ & $5$ & $10$ & $11$ & $18$ & $17$ & $12$ & $20$ & $12$ & $19$ & $17$ & $12$ & $13$ & $17$ \\
\hline
$1$ & $7$ & $10$ & $6$ & $10$ & $13$ & $13$ & $18$ & $12$ & $20$ & $14$ & $15$ & $19$ & $13$ & $15$ \\
\hline
$2$ & $7$ & $13$ & $10$ & $8$ & $10$ & $15$ & $20$ & $13$ & $14$ & $13$ & $16$ & $19$ & $14$ & $10$ \\
\hline
$3$ & $7$ & $8$ & $10$ & $7$ & $15$ & $19$ & $14$ & $19$ & $10$ & $18$ & $16$ & $14$ & $16$ & $14$ \\
\hline
\end{tabular}\\[2ex] 
$T_2$ : 
\begin{tabular}{|c||c|c|c|c|c|c|c|c|c|c|c|c|c|c|}
\hline
& $A_0$ & $A_1$ & $A_2$ & $A_3$ & $B_0$ & $B_1$ & $B_2$ & $B_3$ & $C_0$ & $C_1$ & $C_2$ & $C_3$ & $D_0$ & $E_0$ \\
\hline\hline
$0$ & $7$ & $10$ & $9$ & $10$ & $13$ & $18$ & $16$ & $15$ & $12$ & $15$ & $18$ & $15$ & $14$ & $13$ \\
\hline
$1$ & $7$ & $10$ & $8$ & $10$ & $13$ & $18$ & $16$ & $16$ & $13$ & $15$ & $17$ & $16$ & $14$ & $12$ \\
\hline
$2$ & $7$ & $7$ & $8$ & $8$ & $16$ & $15$ & $16$ & $16$ & $16$ & $16$ & $15$ & $17$ & $15$ & $16$ \\
\hline
$3$ & $7$ & $9$ & $11$ & $8$ & $14$ & $13$ & $16$ & $17$ & $15$ & $18$ & $14$ & $16$ & $13$ & $15$ \\
\hline
\end{tabular}\\[2ex] 
$T_3$ : 
\begin{tabular}{|c||c|c|c|c|c|c|c|c|c|c|c|c|c|c|}
\hline
& $A_0$ & $A_1$ & $A_2$ & $A_3$ & $B_0$ & $B_1$ & $B_2$ & $B_3$ & $C_0$ & $C_1$ & $C_2$ & $C_3$ & $D_0$ & $E_0$ \\
\hline\hline
$0$ & $7$ & $7$ & $10$ & $11$ & $16$ & $15$ & $14$ & $18$ & $14$ & $19$ & $15$ & $14$ & $11$ & $17$ \\
\hline
$1$ & $7$ & $9$ & $8$ & $9$ & $14$ & $16$ & $15$ & $16$ & $15$ & $17$ & $17$ & $14$ & $14$ & $15$ \\
\hline
$2$ & $7$ & $13$ & $9$ & $7$ & $10$ & $17$ & $19$ & $14$ & $13$ & $14$ & $16$ & $18$ & $15$ & $10$ \\
\hline
$3$ & $7$ & $7$ & $9$ & $9$ & $16$ & $16$ & $16$ & $16$ & $14$ & $14$ & $16$ & $18$ & $16$ & $14$ \\
\hline
\end{tabular}\\[2ex] 
$T_4$ : 
\begin{tabular}{|c||c|c|c|c|c|c|c|c|c|c|c|c|c|c|}
\hline
& $A_0$ & $A_1$ & $A_2$ & $A_3$ & $B_0$ & $B_1$ & $B_2$ & $B_3$ & $C_0$ & $C_1$ & $C_2$ & $C_3$ & $D_0$ & $E_0$ \\
\hline\hline
$0$ & $7$ & $8$ & $12$ & $7$ & $15$ & $13$ & $19$ & $18$ & $14$ & $16$ & $15$ & $17$ & $13$ & $13$ \\
\hline
$1$ & $7$ & $9$ & $7$ & $8$ & $14$ & $17$ & $19$ & $13$ & $15$ & $15$ & $14$ & $19$ & $13$ & $16$ \\
\hline
$2$ & $7$ & $10$ & $8$ & $12$ & $13$ & $19$ & $13$ & $15$ & $12$ & $16$ & $18$ & $14$ & $14$ & $14$ \\
\hline
$3$ & $7$ & $9$ & $9$ & $9$ & $14$ & $15$ & $13$ & $18$ & $15$ & $17$ & $17$ & $14$ & $16$ & $13$ \\
\hline
\end{tabular}
\caption{\label{tab3}Multiplicities of $0,1,2,3$ in each building block of $\nabla S_2[8k]$ and $\nabla T_i[8k+7]$ for $i\in\{1,2,3,4\}$.}
\end{center}
\end{table}

\section{The construction method}\label{method}

We now explain how our solution was constructed. Let $m_1,m_2 \ge 2$ be integers, with $m_2$ a multiple of $m_1$. Consider the canonical quotient map
$$
\pi: \Z{m_2} \longrightarrow \Z{m_1}.
$$
If $\nabla$ is a Steinhaus triangle in $\Z{m_2}$, then $\pi(\nabla)$ is a Steinhaus triangle in $\Z{m_1}$. Moreover, if $\nabla$ is balanced, then so is $\pi(\nabla)$, as all fibers of $\pi$ have the same cardinality. Thus, an obvious strategy for constructing balanced Steinhaus triangles in $\Z{m_2}$ consists in trying to lift to $\Z{m_2}$ known balanced Steinhaus triangles in $\Z{m_1}$. This route is tricky, as illustrated by Theorems~\ref{thm3} and \ref{thm4} below. It allowed us to solve the case $m=4$ of Molluzzo's problem, but neither the case $m=6$ nor the case $m=8$ so far.

We shall restrict our attention to \textit{strongly balanced} Steinhaus triangles. These were defined earlier in $\Z{4}$ only. We now generalize them to $\Z{m}$ for all even moduli.

\begin{defn}\label{strong m}
Let $m \ge 2$ be an even modulus. Let $S$ be a finite sequence of length $n\ge 0$ in $\Z{m}$. The Steinhaus triangle $\nabla S$ is said to be \textit{strongly balanced} if, for every $0\le t \le n/(2m)$, the Steinhaus triangle $\nabla S[n-2mt]$ is balanced.
\end{defn}

Note that this definition coincides with Definition~\ref{strong} for $m=4$. From now on, we assume that $m_1=m$ is an even number, and that $m_2=2m_1$. The following notation helps to measure, roughly speaking, to what extent strong solutions in $\Z{m}$ can be lifted to strong solutions in $\Z{2m}$.

\medskip

\begin{notation}
Let $S$ be an infinite sequence in $\Z{m}$. For $n \ge 0$, let $a_n(S)$ denote the number of sequences $T$ in $\Z{2m}$, of length $n$, such that 
\begin{itemize}
\item $\nabla T$ is a strongly balanced Steinhaus triangle in $\Z{2m}$;
\item $\pi(T)=S[n]$, the initial segment of length $n$ in $S$.
\end{itemize}
We denote by $\mathcal{G}_S(t)=\sum_{n=0}^{\infty}a_{n}(S)t^{n}$ the generating function of the numbers $a_{n}(S)$.
\end{notation}

\medskip

We shall use this notation as a convenient device for exhibiting the value of the $a_{n}(S)$ for all $n$ at once. For our present purposes, the favorable case occurs when $\mathcal{G}_S(t)$ is an infinite series, not just a polynomial. Indeed, $\mathcal{G}_S(t)$ is an infinite series if and only if there exists infinitely many strongly balanced Steinhaus triangles in $\Z{2m}$, which lift those in $\Z{m}$ generated by initial segments of $S$.

\subsection{From $\Z{2}$ to $\Z{4}$} Here we set $m=2$. Several types of balanced Steinhaus triangles of length $4k$ or $4k+3$ in $\Z{2}$ are known. See \cite{Ha,EH,EMR}. We focus here on the ones given in \cite{EH}, which have the added property of being strongly balanced.

\begin{thm}[\cite{EH}]\label{thm2}
Let $Q_1,\ldots,Q_4$ and $R_1,\ldots,R_{12}$ be the following eventually periodic sequences of $\Z{2}$:
\begin{center}
\begin{tabular}{cc}
$
\begin{array}{l}
Q_1 = 0100(001001011100)^{\infty}, \\
Q_2 = (010010000111)^{\infty}, \\
Q_3 = 0101(011000011000)^{\infty}, \\
Q_4 = 0101(101000101000)^{\infty}, \\[4ex]
R_1 = 001(010000100001)^{\infty}, \\
R_2 = 0011110(001101010110)^{\infty}, \\
R_3 = 010(000101000010)^{\infty}, \\
\end{array}
$
&
$
\begin{array}{l}
R_4 = 0100001(010010111100001010111111)^{\infty}, \\
R_5 = 0100001(100100001001)^{\infty}, \\
R_6 = 0101011(010101100011)^{\infty}, \\
R_7 = 0101011(010111111101011010011101)^{\infty}, \\
R_8 = 010(101110110010)^{\infty}, \\
R_9 = 100(001000010100)^{\infty}, \\
R_{10} = 1000010(110001101010)^{\infty}, \\
R_{11} = 1111101(011000110101)^{\infty}, \\
R_{12} = 111(110110000111)^{\infty}.
\end{array}
$\\
\end{tabular}
\end{center}
For all integers $i,j,k$ such that $1 \le i \le 4$, $1 \le j \le 12$ and $k \ge 0$, the Steinhaus triangles $\nabla Q_i[4k]$ and $\nabla R_j[4k+3]$ are strongly balanced in $\Z{2}$.
\end{thm}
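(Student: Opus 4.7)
The plan is to mirror the two-stage approach just carried out for $S_1$ in Theorem~\ref{thm1}, applied uniformly to each of the sixteen sequences $Q_1,\ldots,Q_4,R_1,\ldots,R_{12}$. Since a triangle in $\Z{2}$ is balanced precisely when $0$ and $1$ occur equally often, and Definition~\ref{strong m} specialized to $m=2$ requires balance after removing any number of eastern bands of width $4=2m$, the proof reduces to a periodic structural analysis with step size $4$.

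First, for each sequence $S$ I would partition $S$ into an initial segment of length $4$ (respectively $7$ for the $R_j$) followed by blocks of length $12$ or $24$ that match the sequence's period. Setting $A_0 = \nabla(\text{initial segment of length }4)$ and $A_i = \nabla P_i$ for each length-$4$ sub-period $P_i$, I would form the iterated $\ast$ products $B_i = A_i \ast A_{i+1}$, then $C_i = B_i \ast B_{i+1}$, and so on, exactly as in the definition of the building blocks of $\nabla S_1[8k]$. The crucial structural claim, the analogue of Lemma~\ref{lem1}, is that at some iterate level the list of blocks becomes cyclic: the lower sides of the higher blocks repeat those of lower-indexed blocks, so that further $\ast$ products simply reproduce blocks already in the list. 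This cyclicity is what extends the tiling of $\nabla S[4k]$ indefinitely, and it would be verified by direct inspection of the lower sides on figures analogous to Figure~\ref{fig2}.

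Next, for each sequence I would compile a table of the multiplicities $\m_X(0),\m_X(1)$ for every building block $X$, analogous to Table~\ref{tab1}. The inductive step then reads: assuming $\nabla S[4k-4]$ is balanced, show that the band $T = \nabla S[4k]\setminus \nabla S[4k-4]$ satisfies $\m_T(0)=\m_T(1)$. The structural lemma identifies $T$ as a finite, $k$-dependent union of building blocks: a bounded number of initial blocks (determined by $k\bmod p$, where $p$ is the cycling period of the top-level blocks) plus $q-1$ copies of a full cycle of repeating blocks. As in the $S_1$ argument, the contribution of a full cycle has $\m=\m$ on $\{0,1\}$ automatically, so the whole step reduces to checking, for each residue class of $k\bmod p$, that the bounded initial contribution is also balanced on $\{0,1\}$. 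This is a finite arithmetic verification from the multiplicity table. For the base case $k=0$ (respectively $k=0$ with the length-$7$ initial prefix for the $R_j$), direct inspection of $\nabla S[0]$ or $\nabla S[7]$ suffices.

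The main obstacle is the sheer bookkeeping: sixteen sequences, each needing its own block decomposition, its own multiplicity table, and its own case analysis modulo the cycling period. The sequences $R_4$ and $R_7$ are the trickiest, because their periods have length $24$ rather than $12$; for them the cyclicity of $\ast$ products is reached only at a deeper iterate, likely requiring analogues of the $D_0$ and $E_0$ blocks seen for $S_1$, together with a $k\bmod 6$ case split rather than $k\bmod 3$. A cleaner presentation would isolate a general meta-lemma: \emph{any eventually periodic sequence $S$ in $\Z{m}$ whose $\ast$-tower of block $\ast$-products becomes cyclic, and for which the incremental band $T$ is balanced in every residue class modulo the cycling period, gives rise to a strongly balanced family $\nabla S[2mk+r]$.} Once that meta-lemma is established by the inductive argument sketched above, the rest of the proof of Theorem~\ref{thm2} becomes a finite verification carried out once per sequence, exactly as was done here for $S_1$.
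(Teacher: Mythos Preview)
The paper does not contain a proof of Theorem~\ref{thm2}; the result is quoted from \cite{EH} as an established fact and serves only as input data for the lifting construction in Section~\ref{method}. There is therefore no ``paper's own proof'' against which to compare your proposal.

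That said, your strategy is the natural one and is indeed the method used in \cite{EH}: it transports the block-decomposition argument carried out here for $S_1$ in $\Z{4}$ down to $\Z{2}$ with step~$4$. Two small points of caution. First, not all of the $R_j$ have pre-period of length~$7$; the sequences $R_1,R_3,R_8,R_9,R_{12}$ have pre-period of length~$3$, so your block bookkeeping for those must start from an initial triangle of side~$3$ rather than side~$7$. Second, the phrase ``the contribution of a full cycle has $\m=\m$ on $\{0,1\}$ automatically'' overstates the situation: in the $S_1$ proof the analogous identity $\m_C(j)=48$ for all $j$ is not automatic but is read off Table~\ref{tab1} as a numerical fact, and the corresponding equality must be checked separately for each of the sixteen sequences. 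With those caveats, the outline is sound and the remaining work is, as you say, pure bookkeeping.
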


Can we lift some initial segments of these sequences to sequences in $\Z{4}$ which generate strongly balanced Steinhaus triangles? 
To answer this question, we have determined by computer the numbers $a_n(S)$ for all 16 sequences $S$ in Theorem~\ref{thm2} and all $n \ge 1$. In 11 out of the 16 cases, the numbers $a_n(S)$ turn out to vanish for all sufficiently large $n$, i.e. the series $\mathcal{G}_S(t)$ is just a polynomial. But remarkably, in the remaining 5 cases, the $a_n(S)$ turn out to be \textit{ultimately periodic} and non-vanishing, so that the infinite series $\mathcal{G}_S(t)$ is actually a \textit{rational function}. These 16 series are displayed below; the 5 infinite ones occur for the sequences $Q_1,Q_3,R_3,R_9,R_{10}$.

\begin{thm}\label{thm3}
The generating functions $\mathcal{G}_S(t)$ of $Q_1,\ldots,Q_4$ and $R_1,\ldots,R_{12}$ are:
\end{thm}
\vspace{-1cm}
$$
\begin{array}{l}
\mathcal{G}_{Q_1}(t) = \begin{array}[t]{l} 1 + 8t^{8} + 34t^{16} + 58t^{24} + 84t^{32} + 88t^{40} + 86t^{48} + 82t^{56} + 60t^{64} + 36t^{72} + \\ \displaystyle + 34t^{80} + 28t^{88} + 16t^{96} + \frac{2t^{104}}{1-t^{8}},\end{array}\\
\ \\
\mathcal{G}_{Q_2}(t) = \begin{array}[t]{l} 1 + 4t^{8} + 14t^{16} + 32t^{24} + 36t^{32} + 48t^{40} + 44t^{48} + 26t^{56} + 22t^{64} + 8t^{72} + \\ \displaystyle + 6t^{80} + 4t^{88} + 2t^{96},\end{array}\\
\ \\
\mathcal{G}_{Q_3}(t) = \begin{array}[t]{l}
1 + 8t^{8} + 28t^{16} + 46t^{24} + 78t^{32} + 124t^{40} + 118t^{48} + 96t^{56} + 78t^{64} + 60t^{72} + \\
28t^{80} + 20t^{88} + 14t^{96} + 10t^{104} + 4t^{112} + 6t^{120} + 4t^{128} + 6t^{136} + 4t^{144} + 2t^{152} + \\ \displaystyle + 2t^{160} + 2t^{168} + 2t^{176} + 2t^{184} + 2t^{192} + 2t^{200} + 4t^{208} + \frac{2t^{216}}{1-t^{8}},\end{array}\\
\ \\
\mathcal{G}_{Q_4}(t) = 1 + 8t^{8} + 26t^{16} + 42t^{24} + 66t^{32} + 62t^{40} + 52t^{48} + 36t^{56} + 26t^{64} + 12t^{72} + 6t^{80},\\
\ \\
\mathcal{G}_{R_1}(t) = 0,\quad \mathcal{G}_{R_2}(t)=0,\\
\ \\
\mathcal{G}_{R_3}(t) = \begin{array}[t]{l}
10t^7 + 38t^{15} + 70t^{23} + 88t^{31} + 76t^{39} + 54t^{47} + 44t^{55} + 28t^{63} + 16t^{71} + 8t^{79} +\\
\displaystyle + 4t^{87} + 4t^{95} + 4t^{103} + 4t^{111} + 4t^{119} + 6t^{127} + 4t^{135} + 6t^{143} + \frac{4t^{151}}{1-t^{8}},
\end{array}\\
\ \\
\mathcal{G}_{R_4}(t) = \begin{array}[t]{l}
10t^7 + 52t^{15} + 102t^{23} + 136t^{31} + 152t^{39} + 118t^{47} + 108t^{55} + 80t^{63} + 60t^{71} + \\
\displaystyle + 32t^{79} + 20t^{87} + 8t^{95} + 2t^{103},
\end{array}\\
\ \\
\mathcal{G}_{R_5}(t) = 10t^7,
\end{array}
$$

$$
\begin{array}{l}

\mathcal{G}_{R_6}(t) = \begin{array}[t]{l}
10t^7 + 30t^{15} + 66t^{23} + 96t^{31} + 96t^{39} + 94t^{47} + 66t^{55} + 42t^{63} + 24t^{71} + 8t^{79} +\\
\displaystyle + 2t^{87} + 2t^{95},
\end{array}\\
\ \\
\mathcal{G}_{R_7}(t) = \begin{array}[t]{l}
10t^7 + 60t^{15} + 138t^{23} + 204t^{31} + 304t^{39} + 266t^{47} + 246t^{55} + 148t^{63} + 64t^{71} + \\
\displaystyle + 36t^{79} + 14t^{87} + 10t^{95} + 8t^{103},
\end{array}\\
\ \\
\mathcal{G}_{R_8}(t) = 10t^7,\\
\ \\
\mathcal{G}_{R_9}(t) = \begin{array}[t]{l}
10t^7 + 42t^{15} + 80t^{23} + 130t^{31} + 164t^{39} + 174t^{47} + 126t^{55} + 68t^{63} + 38t^{71} + \\
\displaystyle + 20t^{79} + 22t^{87} + 12t^{95} + 2t^{103} + 2t^{111} + 2t^{119} + 2t^{127} + 2t^{135} + 2t^{143} + 2t^{151} + \\
\displaystyle + 2t^{159} + 2t^{167} + 2t^{175} + 2t^{183} + 2t^{191} + 2t^{199} + 4t^{207} + \frac{2t^{215}}{1-t^8},
\end{array}\\
\ \\
\mathcal{G}_{R_{10}}(t) = \begin{array}[t]{l}
10t^7 + 58t^{15} + 98t^{23} + 130t^{31} + 160t^{39} + 138t^{47} + 132t^{55} + 84t^{63} + 64t^{71} + \\
\displaystyle + 34t^{79} + 14t^{87} + 8t^{95} + 6t^{103} + 2t^{111} + 2t^{119} + 4t^{127} + \frac{2t^{135}}{1-t^8},
\end{array}\\
\ \\
\mathcal{G}_{R_{11}}(t) = 4t^7 + 16t^{15} + 26t^{23} + 32t^{31} + 30t^{39} + 30t^{47} + 26t^{55} + 12t^{63} + 8t^{71} + 2t^{79},\\
\ \\
\mathcal{G}_{R_{12}}(t)= 4t^{7}.
\end{array}
$$

The origin of our sequences $S_1$, $S_2$, $T_1$, $T_2$, $T_3$, $T_4$, solving the problem of Molluzzo in $\Z{4}$, is now clear. Indeed, they are lifts to $\Z{4}$ of the 5 sequences $Q_1,Q_3,R_3,R_9,R_{10}$ in $\Z{2}$ with $\mathcal{G}_S(t)$ infinite. More precisely, we have 
$$
\pi(S_1)=Q_1,\;\; \pi(S_2)=Q_3,\;\;  \pi(T_1)=\pi(T_4)=R_3,\;\;  \pi(T_2)=R_{10},\;\;  \pi(T_3)=R_{9},$$ 
as the reader may readily check.

\subsection{From $\Z{4}$ to $\Z{8}$} 

Having solved the problem in $\Z{4}$ with Theorem~\ref{solution}, can we lift our solutions $S_1$, $S_2$, $T_1$, $T_2$, $T_3$, $T_4$ to sequences in $\Z{8}$ giving rise to infinitely many strongly balanced Steinhaus triangles? Unfortunately, the answer is no, as shown by the following computational result.

\begin{thm}\label{thm4}
The generating functions $\mathcal{G}_S(t)$ of $S_1,S_2,T_1,T_2,T_3,T_4$ are polynomials only:
$$
\begin{array}{l}
\mathcal{G}_{S_1}(t) = 1 + 16t^{16} + 46t^{32} + 32t^{48} + 14t^{64},\\
\ \\
\mathcal{G}_{S_2}(t) = 1 + 22t^{16} + 60t^{32} + 56t^{48} + 28t^{64} + 6t^{80},\\
\ \\
\mathcal{G}_{T_1}(t) = 14t^{15} + 40t^{31} + 40t^{47} + 24t^{63} + 8t^{79} + 2t^{95} + 2t^{111},\\
\ \\
\mathcal{G}_{T_2}(t) = 30t^{15} + 66t^{31} + 76t^{47} + 32t^{63} + 12t^{79},\\
\ \\
\mathcal{G}_{T_3}(t) = 14t^{15} + 54t^{31} + 42t^{47} + 34t^{63} + 12t^{79} + 2t^{95},\\
\ \\
\mathcal{G}_{T_4}(t) = 14t^{15} + 54t^{31} + 64t^{47} + 40t^{63} + 10t^{79} + 2t^{95}.
\end{array}
$$
\end{thm}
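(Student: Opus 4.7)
The proof is a bounded finite computation, carried out separately for each $S \in \{S_1, S_2, T_1, T_2, T_3, T_4\}$. The plan is to verify, for each such $S$, that $a_n(S) = 0$ for all $n$ strictly beyond the degree of the claimed polynomial, using a recursive enumeration whose termination is guaranteed by a heredity property of strong balance.

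First, observe that $a_n(S)$ automatically vanishes unless $n$ satisfies the necessary existence condition $\binom{n+1}{2} \equiv 0 \pmod 8$. Since the defined lengths of $S_1, S_2$ are multiples of $8$ and those of $T_1,\ldots,T_4$ are of the form $8k+7$, the relevant lift lengths are in fact restricted to $n \in 16\mathbb{N}$ and $n \in 15 + 16\mathbb{N}$ respectively. Second, we note the following heredity: if $T$ of length $n$ in $\Z{8}$ generates a strongly balanced Steinhaus triangle, then so does $T[n-16]$, since the balance conditions required for $T[n-16]$ to be strongly balanced are exactly a sub-collection of those required for $T$ of length $n$. Consequently, truncation maps strongly balanced lifts of $S[n]$ into strongly balanced lifts of $S[n-16]$, so if $a_{n_0}(S) = 0$ for some admissible $n_0$, then $a_{n}(S) = 0$ for all admissible $n \ge n_0$.

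The enumeration itself is then a level-by-level breadth-first search. For $S = S_1, S_2$, start from the single empty lift at $n = 0$; for $S = T_j$, start by enumerating the $2^{15}$ possible lifts of $S[15]$ and retaining those whose Steinhaus triangle is balanced, yielding $a_{15}(S)$. At each subsequent level, extend every retained $T$ of length $n$ by all $2^{16}$ possible sequences of $16$ additional elements in $\Z{8}$ projecting to the corresponding next block of $S$, and keep only those for which the resulting Steinhaus triangle of length $n+16$ is balanced (equivalently, each class in $\Z{8}$ occurs $\binom{n+17}{2}/8$ times). The count of survivors is exactly $a_{n+16}(S)$. The search continues until an empty level is reached; by heredity, $a_{n'}(S) = 0$ for all larger admissible $n'$, so the generating function is indeed a polynomial, and its coefficients match the table. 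Comparing against the claimed polynomials, the first vanishing level occurs at $n = 80$ for $S_1$, $n = 96$ for $S_2$, and analogously for the $T_j$.

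The only difficulty is computational scale, but it is mild: the branching factor at each level is $2^{16} = 65{,}536$, while the surviving counts $a_n(S)$ stay in the low hundreds throughout (as witnessed by the explicit coefficients), so the total work is on the order of a few million balance tests per sequence and is easily automated. The qualitatively striking point, by contrast with Theorem~\ref{thm3}, is that \emph{all six} of the $\Z{4}$-solutions in Theorem~\ref{thm1} fail to admit an infinite family of strongly balanced $\Z{8}$-lifts. No theoretical mechanism underlying this collapse is visible from the proof; it is precisely this that leaves Molluzzo's problem open for $m = 8$ and motivates the concluding discussion.
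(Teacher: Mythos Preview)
Your proposal is correct and matches the paper's approach: Theorem~\ref{thm4} is presented in the paper simply as a ``computational result'' with no written-out proof, and your write-up spells out exactly the computer search and its termination argument (via the heredity of strong balance under truncation by $16$) that such a computation implicitly relies on.
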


Summarizing, at this stage, it is not even known whether there exist infinitely many balanced Steinhaus triangles in $\Z{8}$.

\section{A weaker version of the problem}\label{weaker}

With our present solution of the case $m=4$, the currently smallest open case of Molluzzo's problem becomes $m=6$. The scarcity of solved instances $(m=2,3^k,4,5,7)$ motivates us to propose a weaker, more accessible version of the problem. 

\begin{problem}[The Weak Molluzzo Problem] Let $m \in \N$, $m \ge 2$. Are there infinitely many balanced Steinhaus triangles in $\Z{m}$?
\end{problem}

The picture is brighter here. Indeed, the first author has shown in \cite{Ch0, Ch1, Ch2} that, \textit{for each odd modulus $m$, there are infinitely many balanced Steinhaus triangles in $\Z{m}$}. Thus, the weak Molluzzo problem is affirmatively solved for all odd $m$, for $m=2$ and here for $m=4$. On the other hand, it is widely open for all even moduli $m \ge 6$. 

Somewhat similarly to the conjecture on the existence of Hadamard matrices of every order divisible by 4, the nature of the problem seems to lie less in the rarity of the solutions than in the difficulty of pinpointing easy-to-describe ones. For instance, in $\Z{6}$, there are exactly 94648 sequences of length 12 yielding a balanced Steinhaus triangle; up to automorphisms, they still total 23662 classes.

\medskip

We note, finally, that all known solutions so far are by explicit constructions. However, the possibility of future nonconstructive existence results cannot be ruled out, for instance with the polynomial method of Alon-Tarsi.

\end{document}